\newtheorem{lemma}{Lemma}
\newtheorem{theorem}{Theorem}
\begin{document}
\renewcommand{\refname}{References}
\renewcommand{\proofname}{Proof.}
\thispagestyle{empty}
\title[On strictly Deza graphs]{On strictly Deza graphs \\ with parameters $\bf (n,k,k-1,a)$ }
\author{{V.V. Kabanov, N.V. Maslova, L.V. Shalaginov}}
\address{Vladislav Vladimirovich Kabanov
\newline\hphantom{iii} Krasovskii Institute of Mathematics and Mechanics,
\newline\hphantom{iii} 16, S. Kovalevskaya str.,
\newline\hphantom{iii} 630090, Yekaterinburg, Russia}
\email{vvk@imm.uran.ru}
\address{Natalia~Vladimirovna~Maslova
\newline\hphantom{iii} Krasovskii Institute of Mathematics and Mechanics,
\newline\hphantom{iii} 16, S. Kovalevskaya str.
\newline\hphantom{iii} 620990, Yekaterinburg, Russia
\newline\hphantom{iii} Ural Federal University,
\newline\hphantom{iii} 19, Mira Str.
\newline\hphantom{iii} 620002, Yekaterinburg, Russia}
\email{butterson@mail.ru}
\address{Leonid Viktorovich Shalaginov
\newline\hphantom{iii} Chelyabinsk State University,
\newline\hphantom{iii} 129, Bratiev Kashirinykh Str.
\newline\hphantom{iii} 454001, Chelyabinsk, Russia
\newline\hphantom{iii} Krasovskii Institute of Mathematics and Mechanics,
\newline\hphantom{iii} 16, S. Kovalevskaya str.
\newline\hphantom{iii} 620990, Yekaterinburg, Russia}
\email{44sh@mail.ru}

\thanks{The first author was supported by RFBR according to the research projects 17-51-560008,
the second author was supported by the Complex Program of Ural Branch Russian Academy of Sciences, project 18-1-1-17, 
the third author was supported by RFBR according to the research projects  16-31-00316 and 17-51-560008.}
\maketitle {\small
\begin{quote}

\noindent{\sc Abstract. }
A nonempty $k$-regular graph $\Gamma$ on $n$ vertices is called a {\em Deza graph} if there exist
constants $b$ and $a$ $(b \geq a)$ such that any pair of distinct vertices of $\Gamma$ has either $b$ or $a$ common neighbours.
The~quantities $n$, $k$, $b$, and $a$ are called the parameters of $\Gamma$ and are written as the quadruple $(n,k,b,a)$.
If a Deza graph has diameter 2 and is not strongly regular, then it is called {\em a strictly Deza graph}. In the present paper, we investigate strictly  Deza graphs whose parameters  $(n, k, b, a) $ satisfy  the conditions $k = b + 1$ and
$\displaystyle\frac{k(k - 1) - a(n - 1)}{b - a} > 1$.

\medskip

\noindent{\bf Keywords:}  regular graphs, graphs with regularity conditions, Deza graphs, strictly Deza graphs
 \end{quote}
}

\bigskip
\hfill {\em  Dedicated to the memory of Michel Deza}
\medskip

\section{Introduction}

In the present paper, we consider finite undirected graphs without loops and multiple edges.
A nonempty $k$-regular graph $\Gamma$ on $n$ vertices is called a {\em Deza graph} if there exist
constants $b$ and $a$ such that any pair of distinct vertices of $\Gamma$ has either $b$ or $a$ common neighbours.
We assume further that $b \geq a$.
The quantities $n$, $k$, $b$, and $a$ are called the parameters of $\Gamma$ and are written as the quadruple $(n,k,b,a)$.

The concept of a Deza graph was introduced in 1999  by M. Erickson, S. Fernando, W. Haemers, D. Hardy, and J. Hemmeter in the seminal paper  \cite{EFHHH} influenced by A. Deza and M. Deza \cite{dd}.   Deza graphs generalize strongly regular graphs in the sense that the number of common neighbours of any pair of vertices in a Deza graph does not depend on adjacency.

A strongly regular graph has diameter $2$, except for the trivial case of a disjoint union of complete graphs. As opposed to  strongly regular graphs,  Deza graphs can have diameter greater than 2.
If a Deza graph has diameter 2 and is not strongly regular, then it is called {\em a strictly Deza graph}. So, we have a trihotomy for the class of Deza graphs: strongly regular graphs, strictly Deza graphs, and Deza graphs of diameter greater than 2.

In  \cite{EFHHH} a basic theory of strictly Deza graphs was developed and several ways to construct such graphs were introduced.
Moreover, all strictly Deza graphs with number of vertices at most 13 were found. In 2011, the investigation of strictly Deza graphs was continued by S. Goryainov and L. Shalaginov in \cite{gsh1}. They found all strictly Deza graphs whose number of vertices is equal to 14, 15, or 16.  In 2014, S. Goryainov and L. Shalaginov in \cite{gsh2} found all strictly Deza graphs  that are Cayley graphs with number of vertices less than 60.

Problems arising in the theory of strictly Deza graphs  sometimes are  similar to problems in the theory of strongly regular graphs.
However, results and methods in these theories differ. In our opinion, an analysis of these differences can enrich
both theories.

For example, it is known that the connectivity of a connected strongly regular graph equals its valency \cite{vc}.  In 2014, the connectivity of some strictly Deza graphs was investigated in \cite{ggk}. In particular,  an example of a strictly Deza graph whose connectivity and valency were not equal was  found.

If $\Gamma$ is a strongly regular graph, then its parameters are written as $(n, k, \lambda, \mu)$, where  $\lambda$ is the number of common neighbours of every two adjacent vertices of $\Gamma$ and $\mu$ is the number of common neighbours of every two nonadjacent and distinct vertices of $\Gamma$.

If a strongly regular graph $\Gamma$ has parameters $(n, k, \lambda, \mu)$ such that $k = \mu$, then 
$\Gamma$ is a complete  multipartite graph with parts of size $n - k$ (see Section 1.3 in \cite{BCN}).
An analogue of this result for strictly Deza graphs with condition $k = b$ was also obtained  in the above-mentioned paper  \cite{EFHHH}.

The complement of a strongly regular graph with parameters $(n, k, \lambda, \mu)$ is also strongly regular with parameters
$(v, v-k-1, v-2k+\mu-2, v-2k+\lambda)$. Therefore, if a strongly regular graph $\Gamma$ has parameters $(n, k, \lambda, \mu)$, where $k = \mu$, then the parameters of complement $\overline{\Gamma}$ satisfy the equality $\overline{k} = \overline{\lambda} + 1$. Hence, the structure of a strongly regular graph $\Gamma$ with $k = \lambda + 1$ can be obtained from the corresponding result for a strongly regular  graph with
$k = \mu$ and vice versa.

It is important to note that there is  the other  situation in the case of strictly Deza graphs. Namely, let $\Gamma$ be a strictly Deza graph. Its complement $\overline\Gamma$ is a Deza graph only if $\Gamma$ is a coedge-regular graph with  $b = a + 2$. Thus, there is no direct connections between a strictly Deza graph with parameters satisfying $k = b$ and a strictly Deza graph with parameters satisfying $k = b+1$. The aim of this paper is to investigate  strictly  Deza graphs with parameters $(n, k, b, a)$ satisfying the condition $k = b + 1$ which resemble to strongly regular graphs with $k = \lambda + 1$.
The structure of such Deza graphs turned out to be much more complicated than the corresponding case of strongly regular graphs.

Let us introduce some definitions and notation.

Let $\Gamma$ be a graph with the vertex set $V(\Gamma)$, and let $v\in V(\Gamma)$. The set of vertices adjacent to $v$ is called the {\em neighbourhood of} $v$ and is denoted by $ N(v) $.  The set $N(v) \cup \{v\}$ is called the {\em closed neighbourhood of} $v$ and is denoted by $N[v]$. The set of vertices at distance 2 from a vertex $v$ is called {\em the second neighbourhood} of $v$ and  is denoted by $ N_2(v) $.

Let $\Delta_1$ and $\Delta_2$ be graphs. A graph $\Gamma$ is called the  {\em extension of $\Delta_1$ by $\Delta_2$} if the following conditions hold:

(1) $V(\Gamma)$ is the set of pairs $(v_1, v_2)$ such that $v_1\in V(\Delta_1)$ and  $v_2\in V(\Delta_2)$.

(2) Vertices $(v_1, v_2)$ and $(u_1, u_2)$ are adjacent in $\Gamma$ if and only if either $v_1$ and $u_1$ are adjacent in $\Delta_1$ or $v_1 = u_1$ and $v_2$, $u_2$ are adjacent in $\Delta_2$.

Sometimes, such a graph $\Gamma$ is called the {\em composition of $\Delta_1$ and $\Delta_2$}.

We say that $\Gamma$ is the {\em $m$-clique extension of $\Delta_1$} if $\Delta_2$ is the complete graph $K_m$ on $m$ vertices.
We say that $\Gamma$ is  the {\em $m$-coclique extension of $\Delta_1$} if $\Delta_2$ is the complement  $\overline{K}_m$ of the complete graph  on $m$ vertices.

Now we introduce the following special notation for Deza graphs.
Let $\Gamma$ be a Deza graph with parameters $(n, k, b, a)$, and let $v$ be a vertex of $\Gamma$.
We consider  the following subsets of $V(\Gamma)$:
\smallskip

$A(v)$ is the set of all vertices $u\in V(\Gamma)$ such that $|N(v)\cap N(u)| = a$.
\smallskip

$B(v)$ is the set of all vertices $u\in V(\Gamma)$ such that $|N(v)\cap N(u)| = b$.
\smallskip

$B[v] := B(v)\cup \{v\}$.
\smallskip

The number of vertices of $A(v)$ is denoted by $\alpha (v)$, and the number of vertices of $B(v)$ is denoted by $\beta (v)$.
It is known that the numbers $\alpha(v)$ and $\beta (v)$ for a given strictly Deza graph
$\Gamma$ are constants independent from the choice of a vertex $v$
(see \cite[Proposition~1.1]{EFHHH}). We denote these constants of  $\Gamma$ by $\alpha(\Gamma)$ and $\beta(\Gamma)$, respectively.

The main results of the present paper are the following two theorems.

\begin{theorem}
Let $\Gamma$ be a strictly Deza graph with parameters $(n, k, b, a)$ and  $\beta (\Gamma) > 1$.
The parameters $k$ and $b$ of $\Gamma$ satisfy the condition $k = b + 1$  if and only if $\Gamma$ is isomorphic to the $2$-clique extension of
the complete multipartite graph with parts  of size  ${\displaystyle\frac{n - k+1}{2}}$.
\end{theorem}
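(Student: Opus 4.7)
The plan for the proof splits naturally into the two directions.

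For the ``if'' direction, I would take $\Gamma$ to be the 2-clique extension of $K_{m\times t}$ with $m,t\ge 2$ and verify by direct case analysis---on twin pairs, same-part non-twins, and different-part pairs---that $\Gamma$ is strictly Deza with $n=2mt$, $k=2t(m-1)+1$, $b=k-1$, $a=k-2t+1$, and $\beta(\Gamma)=2t-1>1$, with part size $t=(n-k+1)/2$.

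For the ``only if'' direction, suppose $\Gamma$ is strictly Deza with $k=b+1$ and $\beta(\Gamma)>1$. The key \emph{twin observation}, from $k=b+1$, is that if $v\sim u$ and $|N(v)\cap N(u)|=b$ then $N[v]=N[u]$; writing $T(v):=\{u:N[u]=N[v]\}$ and $B[v]:=B(v)\cup\{v\}$, one has $B(v)\cap N(v)=T(v)\setminus\{v\}$. I would first eliminate the parameter degeneracy $a=b-1$: the identity $\beta(b-a)=k(k-1)-a(n-1)$ combined with $\beta>1$ forces $n\le k+2$, so the only $k$-regular graphs are $K_n$ or $K_n$ minus a perfect matching, both strongly regular, contradicting ``strictly Deza''; hence $b-a\ge 2$.

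The first main step is to prove that ``$u\in B[v]$'' is an equivalence relation, the substance being transitivity. For pairwise distinct $v,u,w$ with $u\in B[v]$ and $w\in B[u]$, the cases where some pair is an edge reduce via the twin observation; in the all-non-adjacent case, writing $N(v)=A_v\cup\{v^*\}$ and $N(w)=A_w\cup\{w^*\}$ with $A_v=N(v)\cap N(u)$, $A_w=N(w)\cap N(u)$, one shows $|N(v)\cap N(w)|\in\{k-2,k-1,k\}$, and $k\notin\{a,b\}$ together with $b-a\ge 2$ force the value $k-1=b$. The $B[\cdot]$-classes then partition $V(\Gamma)$ into parts $P_1,\dots,P_m$ of common size $\beta+1$. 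Within each $P_i$: non-adjacent $v,u$ have all $b=k-1$ common neighbours outside $P_i$ (an in-part common neighbour would lie in $T(v)\cap T(u)=\emptyset$), so $b\le k-|T(v)|+1$ forces $|T(v)|\le 2$; a counterpart count using $\beta>1$ and the edge count between $P_i$ and its complement rules out isolated vertices of $\Gamma[P_i]$, giving $|T(v)|=2$. Thus $\Gamma[P_i]$ is a perfect matching of $(\beta+1)/2$ twin pairs, $\beta+1$ is even, and $k$ is odd.

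I would then pass to the twin quotient $\tilde\Gamma=\Gamma/T$: it is $(k-1)/2$-regular on $n/2$ vertices, partitioned into $m$ parts of size $t=(\beta+1)/2$ with no within-part edges, and each part is a false-twin class of $\tilde\Gamma$ (since the common neighbour count in $\Gamma$ for same-part pairs is $b=k-1$, giving $\mu_{\tilde\Gamma}=k_{\tilde\Gamma}$). Thus $\tilde\Gamma$ is the blow-up of a graph $\tilde{\tilde\Gamma}$ on $m$ vertices, and the Deza property of $\Gamma$ translates to $t\lambda_{IJ}=a/2-[I\sim J]$ for each pair of distinct parts $I,J$, with $\lambda_{IJ}$ the number of common neighbour parts in $\tilde{\tilde\Gamma}$. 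If adjacent and non-adjacent part-pairs coexisted, divisibility would yield $t\mid 1$, contradicting $t\ge 2$; and diameter 2 prevents all part-pairs from being non-adjacent. Hence $\tilde{\tilde\Gamma}=K_m$, $\tilde\Gamma=K_{m\times t}$, and $\Gamma$ is its 2-clique extension, with $t=(n-k+1)/2$ following from the relation $\beta=n-k$ recovered from the complete multipartite structure. The main obstacles I anticipate are the transitivity of $B[\cdot]$ after eliminating $a=b-1$, the lower bound $|T(v)|\ge 2$, and the ``no mix'' argument via $\tilde{\tilde\Gamma}$.
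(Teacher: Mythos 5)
Your overall architecture is sound and parts of it are genuinely nice: the verification of the ``if'' direction (which the paper only cites from the literature), the direct three-vertex proof that $u\in B[v]$ is an equivalence relation (cleaner than the paper's route through a vertex-type taxonomy), and the final quotient argument all check out. But there is a genuine gap at the step ``each class $P_i$ induces a perfect matching of twin pairs,'' and it sits exactly where the paper does most of its work.

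Your upper bound $|T(v)|\le 2$ is derived from a non-adjacent pair $v,u$ inside $P_i$, so it silently assumes every vertex of $P_i$ has a non-neighbour in $P_i$. This fails precisely when $P_i=T(v)$ is a clique of $\beta+1\ge 3$ mutual twins (the paper's type $(B)$ vertices, where $B(v)\subseteq N(v)$). Nothing in your sketch excludes such classes, and excluding them is not free: the paper needs the divisibility statements $\beta+1\mid k-\beta$ versus $\beta+1\mid k-1$ (Lemmas 14--16) to show clique-classes and matching-classes cannot coexist, and then a separate counting argument (Lemma 17, comparing $\beta+1\mid a$ with $\beta+1\mid a-2\beta$ over adjacent and non-adjacent pairs) to kill the all-clique case. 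Worse, your lower bound $|T(v)|\ge 2$ --- i.e., no vertex of $P_i$ is isolated in $\Gamma[P_i]$ --- is asserted via ``a counterpart count using $\beta>1$ and the edge count between $P_i$ and its complement,'' with no actual argument. An isolated vertex of $\Gamma[P_i]$ is a vertex with $B(v)\cap N(v)=\emptyset$ (the paper's type $(A)$), and ruling these out is the technical heart of the paper: one must first establish a dichotomy for how the $\beta$ singleton differences $N(v)\setminus N(x_i)$ and $N(x_i)\setminus N(v)$ collapse (all equal on one side or all distinct, Lemmas 5 and 9, giving types $(A1)$ and $(A2)$), and then run two separate divisibility contradictions (Lemmas 18 and 19, comparing $\beta+1\mid a$ against $\beta+1\mid a-1,\ a-2,\ a-\beta,$ or $a-2\beta$). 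I see no way a simple edge count between $P_i$ and its complement substitutes for this. Until both the clique-class and the isolated-vertex cases are handled, the reduction to the twin quotient --- and hence the whole ``only if'' direction --- is not established.
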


Graphs from the conclusion of Theorem 1 are pointed out in \cite{EFHHH} (see Example 2.4).

\begin{theorem} Let $\Gamma$ be a strictly Deza graph with parameters $(n, k, k-1, a)$ and
$\beta (\Gamma) > 1$.
Then $a = 2k - n $ and $\Gamma$ is recognizable by its parameters.
\end{theorem}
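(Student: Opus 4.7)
The plan is to reduce Theorem 2 to Theorem 1 and then verify $a = 2k-n$ by a direct count of common neighbors. Since the hypotheses of Theorem 2 (strictly Deza graph with parameters $(n,k,b,a)$, $k = b+1$, and $\beta(\Gamma) > 1$) coincide with those of Theorem 1, I would immediately conclude that $\Gamma$ is isomorphic to the $2$-clique extension $\Gamma_0$ of the complete multipartite graph $K_{m\times t}$ whose $m$ parts each have size $t := (n-k+1)/2$. Counting vertices gives $n = 2mt$, so $m = n/(n-k+1)$; in particular, $\Gamma_0$ is uniquely determined up to isomorphism by the pair $(n,k)$, which already handles the recognizability claim.

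It remains to compute $a$ in $\Gamma_0$. I would label a vertex of $\Gamma_0$ as $(v_1,i)$ with $v_1$ a vertex of $K_{m\times t}$ lying in a part $P$ and $i \in \{1,2\}$, and then split the distinct pairs of vertices into three types. First, a pair inside the same $2$-clique has the same set of outside neighbours, contributing $2t(m-1) = k-1$ common neighbours. Second, two vertices $(u_1,i),(v_1,j)$ with $u_1 \neq v_1$ but both in the same part $P$ are nonadjacent, and their common neighbours are precisely the $2t(m-1) = k-1$ vertices whose first coordinate lies outside $P$. Third, two vertices $(u_1,i),(v_1,j)$ with $u_1 \in P$ and $v_1 \in Q$ for $P \neq Q$ are adjacent, and their common neighbours are $(u_1,3-i)$, $(v_1,3-j)$, and the $2t(m-2)$ vertices whose first coordinate lies outside $P\cup Q$, giving $2 + 2t(m-2)$ in total.

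Substituting $2t = n-k+1$ into the third count yields $2 + 2t(m-2) = n - 4t + 2 = n - 2(n-k+1) + 2 = 2k - n$. Since $\Gamma$ is strictly Deza and not strongly regular, the first two counts (which are equal to $b = k-1$) cannot coincide with the third, so the third value must be $a$, proving $a = 2k-n$. The argument is essentially bookkeeping on top of Theorem 1, and the only point that requires care is the count in the third case, where one must not forget the two ``partner'' vertices $(u_1,3-i)$ and $(v_1,3-j)$ in the respective $2$-cliques when listing common neighbours.
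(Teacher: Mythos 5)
Your proposal is correct and follows exactly the paper's route: the paper also deduces Theorem 2 by invoking Theorem 1 to identify $\Gamma$ as the $2$-clique extension of the complete multipartite graph with parts of size $(n-k+1)/2$ and then computes $a = n - 2(n-k+1) + 2 = 2k-n$. Your version merely spells out the three-case common-neighbour count (and the uniqueness of the extension given $(n,k)$) that the paper leaves implicit.
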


By Theorem 1 $\Gamma$ is isomorphic to the $2$-clique extension of
the complete multipartite graph with parts  of size  ${\displaystyle\frac{n - k+1}{2}}$. Hence,
$a = n - 2(n -k + 1) + 2 = 2k - n$ and $\Gamma$ has parameters $(n, k, k-1, 2k - n )$.

In the forthcoming paper, we will consider  strictly Deza graphs with parameters $(n, k, k-1, a)$  and $\beta (\Gamma) = 1$.

\section{Preliminary results}

Let $\Gamma$ be a strictly Deza graph with parameters $(n,k,b,a)$.
By definition, $V(\Gamma) = \{v\}\cup A(v)\cup B(v)$ for any vertex $v\in \Gamma$. Hence, it is easy to see that $n = 1 + \alpha(\Gamma) + \beta(\Gamma)$.

\begin{lemma}\label{lemma 1}
The following equality holds for $\beta (\Gamma)$:
$$\beta := \beta (\Gamma) = \frac{k(k - 1) - a(n - 1)}{b - a}.$$
\end{lemma}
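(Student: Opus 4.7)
The plan is a standard double-counting argument applied to a single fixed vertex. Since the excerpt states that $\alpha(v)$ and $\beta(v)$ are constants independent of $v$, I only need to perform the count for one arbitrary vertex $v$ and the formula will hold.

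First, I fix $v \in V(\Gamma)$ and consider the sum
\[
S := \sum_{u \in V(\Gamma) \setminus \{v\}} |N(v) \cap N(u)|.
\]
I evaluate $S$ in two ways. For the first evaluation, I partition $V(\Gamma) \setminus \{v\} = A(v) \sqcup B(v)$ according to the Deza dichotomy; by the very definitions of $A(v)$ and $B(v)$ this yields
\[
S = a\,\alpha(v) + b\,\beta(v).
\]

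For the second evaluation, I swap the order of summation: a vertex $w$ contributes to $|N(v) \cap N(u)|$ precisely when $w \in N(v)$ and $u \in N(w) \setminus \{v\}$. For each of the $k$ choices of $w \in N(v)$, there are exactly $|N(w)| - 1 = k-1$ admissible choices of $u$ (we subtract $v$ itself, which lies in $N(w)$ because $w \in N(v)$). Therefore
\[
S = k(k-1).
\]

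Combining the two expressions with the obvious identity $\alpha(v) + \beta(v) = n - 1$ (which holds because $V(\Gamma)\setminus\{v\} = A(v) \sqcup B(v)$), I substitute $\alpha(v) = n - 1 - \beta(v)$ and obtain
\[
a(n-1) + (b-a)\,\beta(v) = k(k-1).
\]
Since $\Gamma$ is strictly Deza, in particular not strongly regular, we have $b > a$, so division by $b-a$ is legitimate and gives the claimed formula. There is no real obstacle here; the only point worth a second of attention is checking that $b \neq a$ so that the denominator is nonzero, which is guaranteed by the strict Deza hypothesis.
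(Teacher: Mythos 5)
Your proof is correct. The paper itself gives no argument for this lemma --- it simply cites Proposition~1.1 of the Erickson--Fernando--Haemers--Hardy--Hemmeter paper --- and your double count of the pairs $(w,u)$ with $w\in N(v)\cap N(u)$ is exactly the standard computation behind that cited result: the edgewise count gives $k(k-1)$, the Deza dichotomy gives $a\,\alpha(v)+b\,\beta(v)$, and $\alpha(v)+\beta(v)=n-1$ finishes it. Your check that $b\neq a$ is also the right thing to verify, and it matches the paper's own later observation (Lemma~4) that a strictly Deza graph, being non--strongly-regular, must have $b>a$. The only dependence you inherit is the constancy of $\beta(v)$, which you correctly flag as coming from the same external proposition.
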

\begin{proof}  See Proposition 1.1 in \cite{EFHHH}.
\end{proof}

\begin{lemma}\label{lemma 2} A strictly Deza graph with parameters $(n, k, k-1, k-2)$ does not exist.
\end{lemma}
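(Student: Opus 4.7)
The plan is to argue by contradiction using a single double-counting around a fixed vertex, which pins $(n,k)$ down to a very short list; each entry can then be ruled out directly.

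Fix a vertex $v\in V(\Gamma)$. For every neighbour $u\in N(v)$ we have $|N(u)\cap N(v)|\in\{k-1,k-2\}$, and since $v\in N(u)$ this gives $|N(u)\setminus N[v]|\in\{0,1\}$. Hence at most $k$ edges run between $N(v)$ and $V(\Gamma)\setminus N[v]$. On the other hand, every $w\in V(\Gamma)\setminus N[v]$ satisfies $|N(w)\cap N(v)|\geq k-2$, so the same edge count is at least $(n-k-1)(k-2)$. Combining:
$$(n-k-1)(k-2)\leq k.$$

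Since $\Gamma$ is strictly Deza we have $\Gamma\ne K_n$, so $n\geq k+2$, and clearly $k\geq 2$. The case $k=2$ (so $b=1$, $a=0$) is immediate: any $2$-regular graph of diameter $2$ is $C_4$ or $C_5$, both strongly regular and neither matching the required parameters. For $k\geq 3$ the displayed inequality restricts $(n,k)$ to $n=k+2$ together with $(k,n)\in\{(3,6),(3,7),(4,7)\}$. I would handle each case as follows. If $n=k+2$, the complement of $\Gamma$ is $1$-regular, so $n$ is even and $\Gamma$ is the complete multipartite graph with $n/2$ parts of size $2$; its Deza parameters are $(n,n-2,n-4,n-2)$, in particular $b=k\ne k-1$. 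For $(k,n)=(4,7)$, Lemma~\ref{lemma 1} gives $\beta=12-12=0$, so $\Gamma$ would be strongly regular. For $(k,n)=(3,7)$ the degree sum $nk=21$ is odd, so no $3$-regular graph exists. Finally, for $(k,n)=(3,6)$ the only $3$-regular graphs on six vertices are $K_{3,3}$ and the triangular prism, with common-neighbour multisets $\{0,3\}$ and $\{0,1,2\}$ respectively, so neither realises $(6,3,2,1)$.

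The main obstacle is the case $(k,n)=(3,6)$: here the displayed inequality is not strong enough to eliminate $\Gamma$ directly, and one must invoke the classification of cubic graphs on six vertices. Fortunately, there are only two such graphs up to isomorphism, so a quick direct check closes the argument.
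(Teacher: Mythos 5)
Your proof is correct, and it reaches the same bottleneck inequality as the paper by a different counting. The paper applies Lemma~1 directly: $\beta = k(k-1)-(k-2)(n-1)>0$ gives the strict bound $n<\frac{k^2-2}{k-2}=k+2+\frac{2}{k-2}$, which for $k>3$ forces $n=k+2$ (handled exactly as you do, via the unique non-neighbour giving $b=k$), and for $k=3$ gives $n<8$, at which point the paper simply cites the classification in \cite{EFHHH} of strictly Deza graphs on fewer than $8$ vertices. Your local edge count between $N(v)$ and $V(\Gamma)\setminus N[v]$ yields the non-strict version $n\le\frac{k^2-2}{k-2}$, so you pick up the three boundary cases $(3,6)$, $(3,7)$, $(4,7)$ that the paper's strict inequality excludes automatically; you then dispose of them by hand (parity for $(3,7)$, the observation that $\beta=0$ forces strong regularity for $(4,7)$ --- which is exactly the strictness you gave up --- and the enumeration of the two cubic graphs on six vertices for $(3,6)$). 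What your route buys is self-containedness: it avoids the appeal to the computer-assisted classification of small strictly Deza graphs, at the cost of a short case analysis. What the paper's route buys is brevity, since invoking $\beta\ge 1$ from the start makes all three of your exceptional cases vanish.
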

\begin{proof}
Let $\Gamma$ be a strictly Deza graph with parameters $(n, k, k-1, k-2)$. Since $b-a=1$, then we have
$\beta = k(k-1) - (k-2)(n-1) > 0$ by Lemma~1.

Let $k=2$. Then $\Gamma$ is the cycle of length $n$, and since $\Gamma$ has diameter $2$ we have that
$n\in \{4, 5\}$.
But the cycles of length 4 and 5 are strongly regular graphs.
Thus, no cycle can be a strictly Deza graph.

Let $k > 2$.  Since  $\beta > 0$, the following inequality holds:
$$ n < \frac{k^2 - 2}{k-2} = k + 2 + \frac{2}{k - 2}.$$

If $k=3$, then $n < 8$. However, strictly Deza graphs having less than 8 vertices do not exist \cite{EFHHH}.

If $k > 3$, then $n\leq k+2$.
Since $\Gamma$ is not a complete graph, we have $n = k + 2$. Hence, for every vertex $v$, there exists a unique nonadjacent vertex $u$ such that all other vertices of $\Gamma $ are adjacent both to $v$ and $u$.
However, $b = k$ in this case, and we have a contradiction to the condition $b=k-1$.
\end{proof}

\section{Proof of Theorem 1}

Till the end of the proof, let $\Gamma$ be a strictly Deza graph with parameters $(n, k, b, a)$, where $k = b + 1$ and
$\displaystyle\beta = \frac{k(k - 1) - a(n - 1)}{b - a} > 1$.

At first, we consider all possibilities of mutual placement of sets $B(v)$ and $N(v)$ for an arbitrary vertex $v$ of $\Gamma$.

\begin{lemma}\label{lemma 3}
Let $v\in V(\Gamma)$. Then, for the set $B(v)$, one of the following statements holds:

$(1)$ $B(v) \cap N(v) = \emptyset$;

$(2)$ $B(v) \subset N(v)$;

$(3)$ $|B(v) \cap N(v)| = 1$.
\end{lemma}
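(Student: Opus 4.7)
\medskip

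\noindent\textit{Proof plan.} The plan is to establish the trichotomy by showing that whenever $|B(v) \cap N(v)| \geq 2$, in fact the whole of $B(v)$ lies in $N(v)$; the cases $|B(v) \cap N(v)| = 0$ or $|B(v) \cap N(v)| = 1$ then supply the remaining two alternatives of the statement.

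First I would extract the structural consequence of $k = b + 1$. If $u \in B(v) \cap N(v)$, then $u$ and $v$ are adjacent and share $b = k - 1$ common neighbours. Since $|N(u)| = k$ and $v \in N(u) \setminus N(v)$, a one-line count forces $N(u) \setminus N(v) = \{v\}$. Combining this with $u \notin N(u)$ and $u \in N(v)$ yields the precise identity
$$N(u) = N[v] \setminus \{u\}.$$
This rigidity is the engine of the proof.

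Next, assuming for contradiction that there are distinct $u_1, u_2 \in B(v) \cap N(v)$ and some $w \in B(v) \setminus N(v)$, I note that $w$ is not adjacent to $v$ and $|N(v) \cap N(w)| = b = k - 1$, so $|N(v) \setminus N(w)| = 1$. For each $i \in \{1, 2\}$, the identity above together with $v \notin N(w)$ gives
$$N(u_i) \cap N(w) = (N[v] \setminus \{u_i\}) \cap N(w) = (N(v) \cap N(w)) \setminus \{u_i\},$$
whose cardinality is $k - 1$ if $u_i \notin N(w)$ and $k - 2$ if $u_i \in N(w)$. Since the number of common neighbours of any two vertices must be $a$ or $b = k - 1$, and since Lemma~2 rules out $a = k - 2$, only the value $k - 1$ is possible, i.e.\ $u_i \notin N(w)$ for both $i = 1, 2$. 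But then $\{u_1, u_2\} \subseteq N(v) \setminus N(w)$, contradicting $|N(v) \setminus N(w)| = 1$.

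The step I expect to require the most care is the appeal to Lemma~2: it is precisely the non-existence of strictly Deza graphs with $a = k - 2$ that prevents the trichotomy from collapsing and forces the dichotomy between $u_i \in N(w)$ and $u_i \notin N(w)$ to resolve the right way. Everything else reduces to tight common-neighbour counts inside $N(v)$ made available by the hypothesis $k = b + 1$.
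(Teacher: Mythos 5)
Your proof is correct and follows essentially the same route as the paper: the key identity $N(u)=N[v]\setminus\{u\}$ for $u\in B(v)\cap N(v)$, combined with the fact that a vertex $w\in B(v)\setminus N[v]$ misses exactly one vertex of $N(v)$. The only difference is that your appeal to Lemma~2 is a detour the paper avoids, since $N(u_i)=N[v]\setminus\{u_i\}$ together with $w\notin N[v]$ already gives $u_i\notin N(w)$ directly.
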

\begin{proof}
Let us recall that if a vertex $u$ belongs to $B(v)$, then
$|N(u) \cap N(v)| = b = k - 1$. If $u\in N(v)\cap B(v)$, then $N(v) = \{u\}\cup (N(u) \cap N(v))$ and $u$ is adjacent to no vertex outside the closed neighbourhood $N[v]$. If $w \in B(v)\setminus N[v]$, then there exist a unique vertex of $N(v)$ nonadjacent to $w$.
Therefore, if neither $(1)$ nor $(2)$ hold, then $|N(v)\cap B(v)| = 1$.
\end{proof}

We assume further that a vertex $v$ of $\Gamma$ is {\em of type} $(A)$, $(B)$, or $(C)$ if $B(v)$ satisfies statement $(1)$, $(2)$, or $(3)$ of Lemma 3, respectively.

\begin{lemma}\label{lemma 4}
 For parameters of  $\Gamma$, the following inequalities hold:

$(1)$ $\alpha > 0$;

$(2)$  $b > a >0$.
\end{lemma}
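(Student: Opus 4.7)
The plan is to treat the three inequalities $\alpha>0$, $b>a$, and $a>0$ separately, each by contradiction; the first two reduce immediately to the definition of a strictly Deza graph, while the third requires a case analysis based on Lemma~3. If $\alpha=0$, then every vertex distinct from $v$ lies in $B(v)$, so every pair of distinct vertices has exactly $b$ common neighbours; this makes $\Gamma$ strongly regular with $\lambda=\mu=b$, contradicting that $\Gamma$ is strictly Deza. Similarly, if $b=a$ then every pair has $a=b$ common neighbours and $\Gamma$ is again strongly regular.

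For $a>0$, suppose for contradiction that $a=0$. Lemma~1 with $b-a=k-1$ yields $\beta=k$. Since $\Gamma$ has diameter $2$, any two distinct nonadjacent vertices share at least one common neighbour, so they cannot lie in $A(v)$ when $a=0$; thus $V(\Gamma)\setminus N[v]\subseteq B(v)$, or equivalently $A(v)\subseteq N(v)$, for every vertex $v$. I would then dispatch each of the three cases of Lemma~3. In Type~(A), $A(v)=N(v)$ and $B(v)=V(\Gamma)\setminus N[v]$, so adjacent pairs have $0$ common neighbours and nonadjacent pairs have $k-1$, forcing $\Gamma$ to be strongly regular with parameters $(2k+1,k,0,k-1)$, a contradiction. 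In Type~(B), $B(v)\subseteq N(v)$ with $|B(v)|=k=|N(v)|$ gives $B(v)=N(v)$ and hence $A(v)=V(\Gamma)\setminus N[v]$; combined with $A(v)\subseteq N(v)$ this forces $A(v)=\emptyset$, contradicting the already proved $\alpha>0$. In Type~(C), the unique vertex $w\in B(v)\cap N(v)$ satisfies $|N(v)\cap N(w)|=k-1=|N(v)|-1$, so $w$ is adjacent to every other neighbour of $v$; for any $u\in N(v)\setminus\{w\}\subseteq A(v)$ we then have $w\in N(u)\cap N(v)$, contradicting $a=0$.

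The only step that is not purely bookkeeping is the observation in Type~(C) that the special common neighbour $w$ is automatically adjacent to all of $N(v)\setminus\{w\}$, which immediately produces a forbidden common neighbour; in every other case the assumption $a=0$ simply collapses $\Gamma$ into a strongly regular graph. Beyond running through the three cases of Lemma~3, I do not foresee any genuine obstacle.
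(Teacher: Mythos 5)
Your proposal is correct and follows essentially the same route as the paper: $\alpha>0$ and $b>a$ come straight from $\Gamma$ not being strongly regular, and $a>0$ is obtained by using diameter $2$ to force $A(v)\subseteq N(v)$ and then ruling out each type from Lemma~3. The only cosmetic difference is that you dispatch type $(B)$ by counting $\beta=k$ while the paper treats types $(B)$ and $(C)$ uniformly via the observation that a vertex of $B(u)\cap N(u)$ is adjacent to all of $A(u)$; both arguments are sound.
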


\begin{proof}
Since $\Gamma$ is not a strongly regular graph,  $\alpha \neq 0$ and $b \neq a$.
Let $a = 0$. Since $\Gamma$ has diameter 2, $A(v)$ is contained in $N(v)$ for each vertex $v$ in $\Gamma$. Moreover, any two vertices of $A(v)$ are not adjacent.

Fix a vertex $u$ of $\Gamma$. If a vertex $u$ is of type $(A)$, then $B(u) \cap N(u) = \emptyset$. Hence, $ B(u) = \Gamma \setminus N[u]$ and $A(u) = N(u)$.
Therefore,  we have  $\alpha = k$ and $\beta = n - k - 1$. Note that, from the equalities $a = 0$ and $\alpha = k$,  we have $A(v) = N(v)$. Thus, $B(v) = \Gamma \setminus N[v]$ for each vertex $v$ of $\Gamma$. However, it is impossible since $\Gamma$ is not a strongly regular graph.

If $u$ is a vertex of  type $(B)$ or $(C)$ in $\Gamma$, then $|B(u) \cap N(u)| \neq \emptyset$.  Since $A(u)\subset N(u)$,  any vertex $v$ of $N(u)\cap B(u)$ is adjacent to any vertex of $A(u)$. This contradicts the assumption  $a = 0$.
\end{proof}

By Lemma 4, it is obvious that, for any two vertices $v$ and $u$ of $\Gamma$, the~intersection of their neighbourhoods $N(v) \cap N(u)$ cannot be empty.

In further Lemmas 5 -- 11, our aim is to investigate  properties of  vertices of type~$(A)$.

\begin{lemma}\label{lemma 5}
Let $x$ be a vertex of type $(A)$ in $\Gamma$. If  there exist  two distinct vertices $x_1$ and $x_2$ in $B(x)$ such that $N(x)\cap N(x_1) = N(x)\cap N(x_2)$, then $N(x)\cap N(x_1) = N(x)\cap N(x_i)$ for each vertex $x_i$ from $B(x)$.
\end{lemma}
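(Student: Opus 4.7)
The plan is to reformulate the lemma via an auxiliary function. Since $x$ has type $(A)$, every $y \in B(x)$ lies outside $N[x]$ and, because $|N(x) \cap N(y)| = b = k - 1$, there is a unique vertex $f(y) \in N(x)$ that is not adjacent to $y$. As $y$ has degree $k$ and is adjacent to $k - 1$ vertices of $N(x)$, its one remaining neighbor sits outside $N(x)$; I will call it the \emph{extra neighbor} of $y$. In this language, the statement to prove is: if $f$ agrees on two distinct vertices of $B(x)$, then $f$ is constant on $B(x)$.

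First I would fix $x_1, x_2 \in B(x)$ with $f(x_1) = f(x_2) = w$ and set $W := N(x) \setminus \{w\}$. Both $x_1$ and $x_2$ are adjacent to every vertex of $W$, so $W \subseteq N(x_1) \cap N(x_2)$; since $|W| = k - 1 = b$ is the maximum allowed size of a common neighborhood, equality holds. In particular, the extra neighbors $y_1$ of $x_1$ and $y_2$ of $x_2$ must differ, since otherwise they would add a $k$-th element to $N(x_1) \cap N(x_2)$.

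The main step is to take an arbitrary $x_3 \in B(x)$, assume toward contradiction that $f(x_3) = w' \neq w$, and compute $|N(x_1) \cap N(x_3)|$ directly. The contribution inside $N(x)$ is $|N(x) \setminus \{w, w'\}| = k - 2$, while the extras $y_1$ and $y_3$ both lie outside $N(x)$ and so contribute to the intersection only if $y_1 = y_3$. Hence $|N(x_1) \cap N(x_3)|$ equals $k - 2$ or $k - 1$. This value must be $a$ or $b = k - 1$, and Lemma 2 rules out $a = k - 2$, so the value is $k - 1$ and $y_1 = y_3$. Running the same argument with $x_2$ in place of $x_1$ yields $y_2 = y_3$, whence $y_1 = y_2$, contradicting the previous paragraph. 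So no such $x_3$ exists, which is the claim.

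The main obstacle is this single appeal to Lemma 2: without the exclusion of the parameters $(n, k, k-1, k-2)$, the value $k-2$ for $|N(x_1) \cap N(x_3)|$ would remain admissible and the crucial identification $y_1 = y_3$ would break down. Everything else is a short count exploiting the rigid form $N(y) = (N(x) \setminus \{f(y)\}) \cup \{\text{extra}\}$ that type $(A)$ imposes.
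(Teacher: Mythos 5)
Your proof is correct and is essentially the paper's own argument: both hinge on the facts that $N(x_1)\cap N(x_2)=N(x)\setminus\{w\}$ has size $b=k-1$ (forcing the two extra neighbours to differ, equivalently $N(x_1)\cap N(x_2)\subseteq N(x)$), and that Lemma~2 excludes $a=k-2$, so a putative $x_i$ with $f(x_i)\neq w$ would force its unique extra neighbour into $N(x_1)\cap N(x_2)$, a contradiction. Your ``extra neighbour'' function $f$ is just an explicit bookkeeping of the paper's observation that $N(x_i)\setminus N(x)$ is a single vertex; no substantive difference.
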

\begin{proof}
Since $b = k - 1$, we have $N(x)\cap N(x_1) = N(x)\cap N(x_2) = N(x_1)\cap N(x_2)$.
Let $x_i$ be a vertex from $B(x)$ and $x_i \notin \{x_1, x_2\}$. If $N(x)\cap N(x_1) \neq N(x)\cap N(x_i)$, then $|N(x)\cap N(x_1) \cap N(x_i)| = k - 2$.
By Lemma~2, the parameter $a$ is not equal to $k - 2$. Therefore,  $|N(x_1)\cap N(x_i)| = |N(x_2)\cap N(x_i)| = k-1$. However, there is a unique  vertex in $N(x_i)\setminus N(x)$. Hence, we have $N(x_i)\setminus N(x)\subset N(x_1)\cap N(x_2)$. Thus, $|N(x_1)\cap N(x_2)| = k$.  A contradiction with $|N(x_1)\cap N(x_2)| = k-1$.
\end{proof}

\begin{lemma}\label{lemma 6}
Let a vertex $x$ be of type $(A)$ in $\Gamma$,
and let $B(x) = \{x_1,\dots , x_{\beta}\}$.
Then the subgraph induced on $B[x]$ in $\Gamma$  is a coclique of size $\beta +1$.
\end{lemma}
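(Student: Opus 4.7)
The equality $|B[x]|=\beta+1$ is immediate, since $x\notin B(x)$, so the content of the lemma is to show that $B[x]$ contains no edge. Because $x$ is of type $(A)$ we have $B(x)\cap N(x)=\emptyset$, which already rules out edges from $x$ to $B(x)$, and it suffices to show that no two distinct vertices of $B(x)$ are adjacent. I would argue by contradiction, assuming without loss of generality that $x_1$ and $x_2$ are adjacent.

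The starting structural observation is that each $x_\ell\in B(x)$ has $k-1$ of its $k$ neighbours inside $N(x)$, so it has a unique neighbour outside $N(x)$. Since $x_1,x_2$ both lie outside $N(x)$ and are adjacent, the outside neighbour of $x_1$ must be $x_2$ and vice versa. Writing $A_\ell:=N(x)\cap N(x_\ell)$ this gives $N(x_\ell)=A_\ell\cup\{x_{3-\ell}\}$ for $\ell=1,2$, and a short computation shows $N(x_1)\cap N(x_2)=A_1\cap A_2$. Hence $|A_1\cap A_2|\in\{a,k-1\}$, and I split on this value.

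If $|A_1\cap A_2|=a$, then $A_1,A_2\subseteq N(x)$ forces $|A_1\cup A_2|=2(k-1)-a\le k$, so $a\ge k-2$; combined with $a<k-1$ from Lemma~4 this gives $a=k-2$, contradicting Lemma~2. In the remaining case $A_1=A_2=:A$, Lemma~5 upgrades the coincidence to $A_\ell=A$ for every $\ell\in\{1,\dots,\beta\}$. Let $y_0$ be the unique vertex of $N(x)\setminus A$. Since $x$ is of type $(A)$ every vertex of $N(x)$ lies in $A(x)$, and therefore $|N(y_0)\cap N(x)|=a$; as $N(y_0)\cap N(x)\subseteq A$ and $|A|=k-1>a$, one can choose $a_0\in A$ with $a_0\not\sim y_0$. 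The $a$ neighbours of $a_0$ in $N(x)\setminus\{a_0\}$ then all lie in $A\setminus\{a_0\}$, and $a_0\in A=A_2\subseteq N(x_2)$ forces $a_0\sim x_2$. Intersecting $N(a_0)$ with $N(x_1)=A\cup\{x_2\}$ now produces exactly $a+1$ common neighbours of $x_1$ and $a_0$; but $a+1$ is neither $a$ nor $b=k-1$ (the latter would again force $a=k-2$, ruled out by Lemma~2). This contradiction completes the proof.

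The main obstacle I anticipate is this second case: after Lemma~5 propagates $A_\ell=A$ across the whole of $B(x)$, one has to identify the ``missing'' vertex $y_0$ of $N(x)$, extract a non-neighbour $a_0\in A$ of $y_0$, and then the delicate ``$a+1$'' count of common neighbours of $x_1$ and $a_0$ is what delivers the contradiction. Lemma~2 gets used in two distinct places, at exactly the borderline where the parameters would slip to the forbidden $(n,k,k-1,k-2)$.
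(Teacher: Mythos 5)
Your proof is correct and follows essentially the same route as the paper's: both reduce the first case to the forbidden value $a=k-2$ via Lemma~2, and in the case $A_1=A_2=A$ both pick a vertex of $A$ non-adjacent to the unique vertex $y_0\in N(x)\setminus A$ and derive the contradiction from the exact count of $a+1$ common neighbours with $x_1$. The only differences are cosmetic (the paper splits on whether $A_1=A_2$ rather than on $|A_1\cap A_2|$, and does not need the appeal to Lemma~5, which propagates the coincidence to all of $B(x)$ but is never used afterwards).
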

\begin{proof}
Let $x_i x_j$ be an edge of the subgraph induced on $B[x]$ in $\Gamma$. If $N(x)\cap N(x_i) \neq N(x)\cap N(x_j)$, then $|N(x_i)\cap N(x_j)| = k - 2 = a $, which is impossible by Lemma~2.
Hence, $N(x)\cap N(x_i) = N(x)\cap N(x_j)$. Since $x_j \in N(x_i)\setminus N(x)$, the equality $N(x)\cap N(x_i) = N(x_i)\cap N(x_j)$ holds. Hence, there exists a vertex $y$ of $N(x)$ such that $N(x) = \{y\}\cup (N(x_i)\cap N(x_j))$. Since $N(x) \cap B(x) = \emptyset$, we have $y\notin B(x)$ and $N(x)\setminus N[y]\neq \emptyset$.

Therefore, there exists  a vertex $v$ in $(N(x_i) \cap N(x_j))\setminus N[y])$.  Since $x$ is a vertex of type $(A)$, we have $v\notin B(x)$. In this case, $|N(v)\cap N(x)|=a$. Moreover, $N(v)\cap N(x)$ is contained in $N(x_i)\cap N(x_j)$.
Hence,  $N(v)\cap N(x_i)$ contains the set $N(v)\cap N(x)$ and the vertex  $x_j$. Therefore, $N(v)\cap N(x_i)$ contains $a + 1=b$ vertices.  But this contradicts Lemma~2.
\end{proof}

\begin{lemma}\label{lemma 7}
For each vertex $x$ of type $(A)$ and each vertex $x_i$ in $B(x)$, the equality $B[x] = B[x_i]$ holds.
Moreover, $N(x_i)\cap B(x_i) = \emptyset$. Thus, each vertex $x_i\in B(x)$ is of type  $(A)$.
\end{lemma}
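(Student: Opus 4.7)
The plan is to show $B[x] \subseteq B[x_i]$ for an arbitrary $x_i \in B(x)$, upgrade this inclusion to an equality via the constancy of $\beta(\Gamma)$, and then read off that $x_i$ is of type $(A)$ from what is already known about $B[x]$.

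For the inclusion $B[x] \subseteq B[x_i]$, I first note that $x \in B(x_i)$ is immediate from $x_i \in B(x)$. Take any other $x_j \in B(x)$ with $j \neq i$. Since $|N(x_i) \cap N(x)| = |N(x_j) \cap N(x)| = k-1$ and $|N(x)| = k$, inclusion--exclusion inside $N(x)$ gives
\[
|N(x) \cap N(x_i) \cap N(x_j)| \;\geq\; (k-1) + (k-1) - k \;=\; k - 2,
\]
and therefore $|N(x_i) \cap N(x_j)| \geq k - 2$. Because $a < b = k-1$ and Lemma~2 forbids $a = k-2$, we have $a \leq k - 3$. Hence the intersection size cannot equal $a$, and lying in $\{a, b\}$ forces it to be $b$. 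Thus $x_j \in B(x_i)$, and $B[x] \subseteq B[x_i]$.

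Since $\beta(\Gamma)$ is a constant of the graph, $|B[x]| = \beta + 1 = |B[x_i]|$, so the inclusion is actually an equality. It remains to verify $N(x_i) \cap B(x_i) = \emptyset$. Writing $B(x_i) = B[x] \setminus \{x_i\} = \{x\} \cup (B(x) \setminus \{x_i\})$, the vertex $x$ lies outside $N(x_i)$ because $x_i \notin N(x)$ (as $x$ is of type $(A)$), and every other $x_j$ lies outside $N(x_i)$ because $B[x]$ is a coclique by Lemma~6. Hence $N(x_i) \cap B(x_i) = \emptyset$, and $x_i$ is of type $(A)$.

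The delicate point is the proof that $x_j \in B(x_i)$: the inclusion--exclusion bound only rules out intersection sizes strictly below $k-2$, so without the auxiliary input $a \neq k-2$ provided by Lemma~2 the value $a$ would not be separable from $b$ and the argument would stall. Everything else is routine bookkeeping.
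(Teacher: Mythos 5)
Your proof is correct and follows essentially the same route as the paper: the intersection bound $|N(x_i)\cap N(x_j)|\geq |N(x)\cap N(x_i)\cap N(x_j)|\geq k-2$ combined with Lemma~2 to force $x_j\in B(x_i)$, then the constancy of $\beta$ to get equality, and Lemma~6 (plus $x$ being of type $(A)$) for $N(x_i)\cap B(x_i)=\emptyset$. The only difference is that you spell out the inclusion--exclusion and the step $a\leq k-3$ more explicitly than the paper does.
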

\begin{proof}
Let $x_i\in B(x)$. It is clear that  $x\in B(x_i)$. For any vertex $x_j \in B(x)$  $(x_j\neq x_i )$, the inequality $|N(x_i)\cap N(x_j)|\geq |N(x)\cap N(x_i)\cap N(x_j)| \geq k - 2$ holds. By Lemma~2 $|N(x_i)\cap N(x_j)| > k - 2$. Therefore, $|N(x_i)\cap N(x_j)| = k - 1$ and $x_j\in B(x_i)$. This implies the equality $B[x] = B[x_i]$.

Moreover, by Lemma~6, the subgraph induced on $B[x]$ in $\Gamma$ is a coclique.  Since $B[x] = B[x_i]$, we have $N(x_i)\cap B(x_i) = \emptyset$. Thus, each vertex $x_i\in B(x)$ is of type~$(A)$.
\end{proof}

\noindent{\bf Remark.} By the choice of a vertex $x$, for any vertex $x_i\in B(x)$, there is  a unique vertex in $N(x)$ nonadjacent to $x_i$. At the same time, there is  a unique vertex  in $N_2 (x)$ adjacent to $x_i$.

\begin{lemma}\label{lemma 8} Suppose that $x$ is a vertex of type $(A)$, $y \in N(x)$, and $y$ is nonadjacent to a vertex
$x_i \in B(x)$. Let $z\in N_2 (x)$ be adjacent to $x_i$. Then the following statements hold:

$(1)$ $N(x)\cap N(y) = N(x)\cap N(z)$;

$(2)$  $B[v]$ is contained in $N(x)\cap N(y)$ for any vertex $v$ from $N(x)\cap N(y)$.
\end{lemma}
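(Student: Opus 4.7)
The plan is to lean on the structural identity $N(x_i) = (N(x)\setminus\{y\}) \cup \{z\}$ afforded by the Remark preceding the lemma, together with Lemma~2's prohibition of a strictly Deza graph with parameters $(n,k,k-1,k-2)$ (equivalently, the exclusion of $b=a+1$).

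For~(1) I would first verify that both $y$ and $z$ lie in $A(x)$. The vertex $y$ lies in $N(x)$ and $B(x)\cap N(x)=\emptyset$ (since $x$ has type~$(A)$), so $y\in A(x)$. For $z$: if $z$ were in $B(x)$, then by Lemma~7 we would have $B[z]=B[x]$, which induces a coclique by Lemma~6, contradicting $z\sim x_i\in B[x]$. Hence $|N(x)\cap N(y)|=|N(x)\cap N(z)|=a$. Next I would show $y\not\sim z$: using the Remark's identity for $N(x_i)$ one computes $|N(y)\cap N(x_i)|=a+\varepsilon$ with $\varepsilon\in\{0,1\}$ indicating whether $y\sim z$; since $y\not\sim x_i$, this value must lie in $\{a,b\}$, and since Lemma~2 excludes $b=a+1$, one gets $\varepsilon=0$. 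Finally, for any $w\in N(x)\setminus\{y\}$ the vertex $w$ is adjacent to $x_i$ and lies in $A(x)$, and the analogous count yields
\[
|N(w)\cap N(x_i)|=a-\varepsilon_1(w)+\varepsilon_2(w),
\]
where $\varepsilon_1(w),\varepsilon_2(w)\in\{0,1\}$ record adjacency of $w$ with $y$ and with $z$ respectively. The three possible values $a-1,\,a,\,a+1$ must lie in $\{a,b\}$; Lemma~2 again forbids $a+1=b$, so only $\varepsilon_1(w)=\varepsilon_2(w)$ survives. This gives $N(y)\cap N(x)=N(z)\cap N(x)$, proving~(1).

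For~(2), fix $v\in N(x)\cap N(y)$ and $u\in B(v)$. Since $|N(u)\cap N(v)|=k-1$, the set $N(v)\setminus N(u)$ has a single element, so it suffices to rule out both $x\in N(v)\setminus N(u)$ and $y\in N(v)\setminus N(u)$. Suppose $u\not\sim x$. Then $N(u)\supseteq N(v)\setminus\{x\}$; since $v\in N(x)\setminus\{y\}$ we have $v\sim x_i$, and by~(1) also $v\sim z$, so both $x_i$ and $z$ belong to $N(u)$. From $u\in N_2(x)$ and $u\sim x_i$ the Remark forces $u=z$, contradicting $u\sim z$. Suppose instead $u\not\sim y$. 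Then $N(u)\supseteq N(v)\setminus\{y\}$ contains both $x$ and $z$, so $u\in N(x)\cap N(z)=N(x)\cap N(y)$ by~(1), giving $u\sim y$, again a contradiction. Hence $u\in N(x)\cap N(y)$, and adjoining $v$ itself gives $B[v]\subseteq N(x)\cap N(y)$.

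The principal obstacle is part~(1): one has to hit upon the right auxiliary vertex (namely $x_i$) and the right quantity to count, then recognise that Lemma~2 is precisely the input needed to kill the borderline case $b=a+1$ that would otherwise permit $\varepsilon_1(w)\neq\varepsilon_2(w)$. Once~(1) is in hand, part~(2) is largely case analysis on the singleton $N(v)\setminus N(u)$, with~(1) automatically disposing of the more delicate case $u\not\sim y$.
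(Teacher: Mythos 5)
Your proof is correct and follows essentially the same route as the paper's: both parts rest on the identity $N(x_i)=(N(x)\setminus\{y\})\cup\{z\}$, a common-neighbour count in which Lemma~2 and Lemma~4 exclude the borderline values, and, for part $(2)$, the observation that a vertex of $B(v)$ is nonadjacent to at most one vertex of $N(v)$. The only cosmetic difference is that the paper obtains the equality in $(1)$ from the single inclusion $N(x)\cap N(y)\subseteq N(x)\cap N(z)$ together with $|N(x)\cap N(z)|=a$, whereas you prove the biconditional $w\sim y\Leftrightarrow w\sim z$ directly for all $w\in N(x)\setminus\{y\}$.
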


\begin{proof}
$(1)$ At first, since $x$ is a vertex of type $(A)$ and $y\notin B(x)$, we have $|N(y) \cap N(x)| = a$.  Then $N(x) = \{y\}\cup (N(x)\cap N(x_i))$.
Moreover, $N(x)\cap N(y) \subseteq N(x)\cap N(x_i)$. By Lemma~4(2), we have $N(x) \cap N(y) \neq \emptyset$. If $v\in N(x) \cap N(y)$, then $v$ is adjacent to the vertex $y$ and $a - 1$ vertices in $N(x_i)$. But $v$ must be adjacent to at least $a$ vertices in $N(x_i)$.
Hence, $v$ is adjacent to $z$. Therefore, $N(x) \cap N(y)$ is contained in $N(z) \cap N(x)$.
 By Lemma~6, $z\notin B(x)$. Therefore, $|N(z) \cap N(x)| = a$. This implies the required equality $N(x)\cap N(y) = N(x)\cap N(z)$.

$(2)$ Let $v \in N(x)\cap N(y)$ and $w \in B(v)$. By  statement $(1)$, we have  $\{x, y, z, x_i\}\subseteq N(v)$. Since $w\in B(v)$, $w$ is adjacent to at least three vertices in $\{x, y, z, x_i\}$.
Now the required assertion follows from the equality $N(x)\cap N(y) = N(x)\cap N(z)$ proved in the previous statement and from the equality $N(x_i)\cap N(y) = N(x_i)\cap N(z)$ obtained from it by applying Lemma~7.
\end{proof}

\begin{lemma}\label{lemma 9}
For any vertex $x$ of type $(A)$, one of the following statements holds:

 $(1)$  There is a unique vertex $y \in N(x)$ such that $\{y\} = N(x)\setminus N(x_i)$ for any $x_i \in B(x)$. Moreover,
$| \bigcup_{i=1}^{\beta}(N(x_i) \setminus N(x))| = \beta$.

$(2)$  There is a unique vertex $z\in N_2(x)$ such that
$\{z\} = N(x_i)\setminus N(x) $ for any $x_i \in B(x)$. Moreover, $|\bigcup_{i=1}^{\beta}(N(x) \setminus N(x_i))| = \beta$.
\end{lemma}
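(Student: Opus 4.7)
The plan is to analyze, for each $x_i\in B(x)$, the unique vertex $y_i\in N(x)\setminus N(x_i)$ and the unique vertex $z_i\in N(x_i)\setminus N(x)$ guaranteed by the remark preceding Lemma~8, and to prove that the collection $\{y_1,\dots,y_\beta\}$ either is a single vertex or consists of $\beta$ distinct vertices, with the two cases controlling the behaviour of $\{z_1,\dots,z_\beta\}$. First I would observe that $N(x)\cap N(x_i)=N(x)\setminus\{y_i\}$, so the equality $N(x)\cap N(x_i)=N(x)\cap N(x_j)$ is equivalent to $y_i=y_j$. Applying Lemma~5 then yields a clean dichotomy: either all $y_i$ coincide, or they are pairwise distinct.

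The main computational step is an intersection count. By Lemma~7, $x_j\in B(x_i)$ whenever $i\ne j$, so $|N(x_i)\cap N(x_j)|=k-1$. Intersecting with $N(x)$ gives $N(x)\cap N(x_i)\cap N(x_j)=N(x)\setminus\{y_i,y_j\}$, which has size $k-1$ if $y_i=y_j$ and $k-2$ otherwise. Subtracting, the set $N(x_i)\cap N(x_j)\cap N_2(x)$ has exactly $0$ elements in the first situation and exactly $1$ element in the second.

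To finish, I would treat the two cases separately. If all $y_i$ coincide with a common vertex $y$, then $\{y\}=N(x)\setminus N(x_i)$ for every $i$, which is the first half of~(1); the intersection count says no vertex of $N_2(x)$ lies in $N(x_i)\cap N(x_j)$ for $i\ne j$, and since $z_i\in N(x_i)\cap N_2(x)$ it cannot belong to $N(x_j)$, so $z_i\ne z_j$. Hence $\{z_1,\dots,z_\beta\}$ has size $\beta$ and equals $\bigcup_i(N(x_i)\setminus N(x))$, finishing~(1). If instead the $y_i$ are pairwise distinct, the intersection count produces, for every $i\ne j$, a single vertex of $N(x_i)\cap N(x_j)$ outside $N(x)$, and that vertex must simultaneously be the unique $z_i$ and the unique $z_j$; hence all the $z_i$ collapse to one vertex $z\in N_2(x)$, giving the first half of~(2). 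The $\beta$ pairwise distinct singletons $N(x)\setminus N(x_i)=\{y_i\}$ then union to a set of cardinality $\beta$, completing~(2).

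The only genuine obstacle is recognising that the two superficially separate questions — whether the $y_i$'s coincide and whether the $z_i$'s coincide — are tied together by the single equality $|N(x_i)\cap N(x_j)|=k-1$ coming from Lemma~7; the rest is bookkeeping with Lemma~5 and the fact that $N(x)\cap N(x_i)$ is missing precisely the element $y_i$.
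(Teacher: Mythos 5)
Your proposal is correct and follows essentially the same route as the paper: Lemma~5 gives the dichotomy (all the $y_i$ coincide or they are pairwise distinct), and the count $|N(x_i)\cap N(x_j)|=k-1$ forces the $z_i$ to be pairwise distinct in the first case and to collapse to a single vertex in the second. The only cosmetic difference is that you import $|N(x_i)\cap N(x_j)|=k-1$ uniformly from Lemma~7, whereas the paper re-derives it in case $(2)$ via Lemma~2; both are valid.
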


\begin{proof}
 $(1)$  Let  $x$ be a vertex of type $(A)$. Then  $B(x)$ is contained in $N_2(x)$.
Since $k = b + 1$, we have $|N(x) \setminus N(x_i)|=1$ and $|N(x_i) \setminus N(x)|=1$.
By the condition of Theorem 1, we have $\beta > 1$.

Let the number of vertices in $\bigcup_{i=1}^{\beta}(N(x) \setminus N(x_i))$ be less than $\beta$.
Then there exist two distinct vertices $x_i$ and $x_j $ of $B(x)$ such that the equality
$N(x) \cap N(x_i) = N(x) \cap N(x_j)$ holds. In this case, by Lemma~5,
$N(x)\cap N(x_i) = N(x)\cap N(x_l)$ for any vertex $x_l$ in $B(x)$. Therefore, there exists a vertex $y$ such that
$N(x) =\{y\}\cup N(x)\setminus N(x_i)$ for any $x_i$ in $B(x)$. Moreover, for distinct vertices $x_j$ and $x_l$ of $B(x)$, any two differences $N(x_j)\setminus N(x)$ and $N(x_l)\setminus N(x)$ cannot coincide. Otherwise,
$|N(x_j)\cap N(x_l)| = k$, but this contradicts the condition on the parameter $b$ of $\Gamma$. Hence, the number of vertices in the set
$\bigcup_{i=1}^{\beta}(N(x_i) \setminus N(x))$ is equal to the number of vertices in $B(x)$.
Thus, the condition  $(1)$  for $\Gamma$ holds.

$(2)$ Let the differences $N(x) \setminus N(x_i)$ be pairwise distinct for $i \in \{1,\dots , \beta \}$. Therefore,
$|N(x_i) \cap N(x_j)\cap N(x)| = k - 2$. Since $a \neq k-2$ by Lemma~2, we have $|N(x_i) \cap N(x_j)| = b$ for any two distinct vertices $x_i$ and $x_j$ of $B(x)$.
It follows that, for any pair $N(x_i)$ and $N(x_j)$, there exist a  unique common vertex $z \in N(x_i) \cap N(x_j) \cap N_2(x)$. Thus, condition $(2)$  holds for $\Gamma$ .
\end{proof}

Further, if the conclusion of statement   $(1)$ of Lemma~9 holds, let $N(x_i) \setminus N(x) = \{y_i\}$ and
$\bigcup_{i=1}^{\beta}(N(x_i) \setminus N(x)) = \{ y_1, \dots , y_{\beta}\}.$
If the conclusion of statement $(2)$ of Lemma~9 holds, let $N(x) \setminus N(x_i) = \{z_i\}$ and
$\bigcup_{i=1}^{\beta}(N(x) \setminus N(x_i)) = \{ z_1, \dots , z_{\beta} \}.$

\begin{lemma}\label{lemma 10}

If statement  $(1)$ of Lemma~9 holds,  then $B(y) = \{ y_1, \dots , y_{\beta} \}$ and $y$ is of type $(A)$.

If statement $(2)$ of Lemma~9 holds,  then $B(z) = \{ z_1, \dots , z_{\beta} \}$ and $z$ is of type~$(A)$.
\end{lemma}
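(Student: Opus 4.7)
My plan is to handle case~(1) in detail; case~(2) is entirely analogous with $N(x)$ and $N_2(x)$ interchanged. For case~(1), the goal is to (i)~show $\{y,y_1,\dots,y_\beta\}$ is a coclique, (ii)~force every vertex of $N(y)\setminus(\{x\}\cup N(x))$ to be adjacent to every $y_i$, and (iii)~combine these to compute $|N(y)\cap N(y_i)|=b$ and observe $B(y)\cap N(y)=\emptyset$.

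For~(i), set $S=N(x)\cap N(y)$; Lemma~8(1) applied with $z=y_i$ gives $S=N(x)\cap N(y_i)$ for each $i$, and $|S|=a$ because $y\in A(x)$. Since $N(x_j)=(N(x)\setminus\{y\})\cup\{y_j\}$, the intersection $N(y)\cap N(x_j)$ equals $S$ or $S\cup\{y_j\}$ depending on whether $y\sim y_j$; its size is $a$ or $a+1$. The value $a+1$ would have to equal $b$, giving $b-a=1$ with $b=k-1$, which is excluded by Lemma~2, so $y_j\notin N(y)$ for every~$j$. For pairwise non-adjacency of the $y_i$'s, Lemma~7 makes each $x_i$ of type~$(A)$; applying Lemma~8(1) to $x_i$ with the pair $(x_j,y_j)$, $j\neq i$ (one checks $y_i\not\sim x_j$ and $y_j\in N_2(x_i)\cap N(x_j)$), yields $N(x_i)\cap N(y_i)=N(x_i)\cap N(y_j)$, and a direct computation using $N(x_i)=(N(x)\setminus\{y\})\cup\{y_i\}$ forces $y_i\notin N(y_j)$.

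For~(ii) and~(iii), put $T=N(y)\cap N_2(x)$; by~(i), $T\subseteq A(x)\setminus\{y_1,\dots,y_\beta\}$ and $|T|=k-1-a$. For any $v\in T$,
\[
|N(v)\cap N(x_i)|=|N(v)\cap(N(x)\setminus\{y\})|+|N(v)\cap\{y_i\}|=(a-1)+[v\sim y_i],
\]
which is $a-1$ or $a$; the value $a-1$ is impossible since $b>a$ by Lemma~4 forbids $a-1\in\{a,b\}$, so $v\sim y_i$ is forced. Hence $T\subseteq N(y_i)\cap A(x)$, and as both sets have size $k-1-a$ they coincide. Therefore $N(y)\cap N(y_i)\supseteq S\cup T$ has at least $a+(k-1-a)=b$ elements, giving $y_i\in B(y)$. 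Since $|B(y)|=\beta$, we conclude $B(y)=\{y_1,\dots,y_\beta\}$, and by~(i) this set is disjoint from $N(y)$, so $y$ is of type~$(A)$.

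For case~(2), set $S'=N(x)\cap N(z)$ and $R=N(z)\cap A(x)\cap N_2(x)$; apply Lemma~8(1) with $(z_i,z)$ playing the role of $(y,y_i)$ to get $S'=N(x)\cap N(z_i)$ for every $i$, and repeat the $a+1\neq b$ argument of~(i) to show that $z_i\notin N(z)$ and that the $z_i$'s are pairwise non-adjacent. A parallel computation of $|N(u)\cap N(x_i)|$ for $u\in R$, now with $N(x_i)=(N(x)\setminus\{z_i\})\cup\{z\}$, again excludes the $a\pm1$ alternative and forces $u\sim z_i$; hence $R=N(z_i)\cap A(x)\cap N_2(x)$, and adding up gives $|N(z)\cap N(z_i)|=a+(\beta-1)+(k-a-\beta)=b$, so $B(z)=\{z_1,\dots,z_\beta\}$ and $z$ is of type~$(A)$. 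The main obstacle in both cases is performing the two cardinality computations in which Lemma~2 (or $b>a$) cuts off the $a\pm1$ option; once those are handled, the rest is bookkeeping over the partition of $V(\Gamma)$ determined by~$x$.
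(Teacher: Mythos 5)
Your proof is correct and follows essentially the same route as the paper's: Lemma~8(1) identifies $N(x)\cap N(y)$ with $N(x)\cap N(y_i)$, and the constraint $|N(v)\cap N(x_i)|\in\{a,b\}$ forces every $v\in N(y)\cap N_2(x)$ to be adjacent to $y_i$, which together yield $|N(y)\cap N(y_i)|=b$ and $B(y)\cap N(y)=\emptyset$ (and analogously for $z$). One superfluous slip: $N(y_i)\cap A(x)$ has $k-1$ elements, not $k-1-a$ (you mean $N(y_i)\cap A(x)\cap N_2(x)$), but since only the inclusion $T\subseteq N(y_i)$ is actually used, nothing is affected.
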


\begin{proof}
Let  statement  $(1)$ of Lemma~9 hold.
Let $v$ be any  vertex from $N(y) \setminus N[x]$. The vertex $v$ cannot belong to $B(x)$ by the choice of $y$. Therefore, $|N(x)\cap N(v)| = a$. But $N(x)\cap N(v)$ contains $y$ and, hence, only $a - 1$ vertices from $N(x)\cap N(x_i)$ for each $x_i\in B(x)$. However,  $v$ must be adjacent to at least $a$ vertices in $N(x_i)$. Thus, $v$ must be adjacent to a vertex in $N(x_i)\setminus N(x) = \{y_i\}$. Hence, any vertex from $N(y) \setminus N(x)$ is adjacent to $y_i$. By Lemma~8(1) any vertex $y_i$ belongs to $B(y)$.

Since any vertex $y_i\notin B(x)$, we have $|N(y_i)\cap N(x)| = a$. Thus, $N(y_i)$ does not contain $y$ and  $N(y)\cap \{ y_1, \dots , y_{\beta} \} = \emptyset$. Therefore, $y$ is of type $(A)$.

Let statement $(2)$ of Lemma~9 hold. If we consider an arbitrary  vertex $v$ from $N(z_i) \setminus N[x]$, then similar arguments as in the previous case prove  the lemma.
\end{proof}

We proved that the vertices $y$ and $z$ in Lemma~10  are of type $(A)$. Further, for convenience, if a vertex $v$ has type $(A)$, then we denote by $v^{\star}$ a unique vertex described in Lemma~9.

By Lemma~10,  we have two possibilities for vertices of type $(A)$ in $\Gamma$.
Further, if a vertex satisfies statement  $(1)$ of Lemma~9, then we call it {\em a
vertex of type  $(A1)$}. If a vertex satisfies statement $(2)$ of Lemma~9, then we call it {\em a
vertex of type  $(A2)$}.

\begin{lemma}\label{lemma 11}
For any vertex of type $(A)$ of $\Gamma$, one of the following statements holds.

$(1)$ If $x$ is a vertex of type  $(A1)$, then $N(x)\setminus \{x^{\star}\}$  contains $B[v]$  for every vertex $v\in N(x)\setminus \{x^{\star}\}$.

$(2)$ If $x$ is a vertex of type  $(A2)$, then $N(x)\setminus B(x^{\star})$ contains $B[v]$ for every vertex $v\in N(x)\setminus B(x^{\star})$.
\end{lemma}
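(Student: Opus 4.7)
The plan is to fix a vertex $v$ in the claimed set $S$ (with $S=N(x)\setminus\{y\}$ in case~(1), writing $y=x^{\star}$; and $S=N(x)\setminus B(z)$ in case~(2), writing $z=x^{\star}$), pick an arbitrary $w\in B(v)$, and show $w\in S$. The first key observation is that in both cases $v$ is adjacent to $x$ and to every $x_i\in B(x)$: in case~(1) because $v\neq y$ and by Lemma~9(1) the set $N(x)\cap N(x_i)$ equals $N(x)\setminus\{y\}$; in case~(2) because $v\notin\{z_1,\ldots,z_{\beta}\}$ places $v$ in $N(x)\setminus\{z_i\}=N(x)\cap N(x_i)$ for every $i$ by Lemma~9(2). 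Since $|N(w)\cap N(v)|=b=k-1$, the vertex $w$ fails adjacency with exactly one of the $k$ neighbours of $v$, and I split into three subcases according to whether that missing neighbour is $x$, some $x_i$, or a remaining neighbour of $v$.

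The first two subcases should yield contradictions. If $w$ is nonadjacent to $x$ but adjacent to every $x_i$, then $w\in\bigcap_{i}N(x_i)\setminus N(x)$; using $N(x_i)=(N(x)\setminus\{y\})\cup\{y_i\}$ with the $y_i$ pairwise distinct (thanks to $\beta>1$), this set is empty in case~(1), while using $N(x_i)=(N(x)\setminus\{z_i\})\cup\{z\}$ in case~(2) it equals $\{z\}$ and forces $w=z$, which in turn means $v\in B(z)$, contradicting $v\notin B(z)$. If $w$ is adjacent to $x$ but nonadjacent to a single $x_i$, then $w\in N(x)\setminus N(x_i)$ equals $\{y\}$ in case~(1) and $\{z_i\}$ in case~(2). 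I then use that $x^{\star}$ is itself of type~$(A)$ by Lemma~10: this gives $B(y)=\{y_1,\ldots,y_{\beta}\}$ in case~(1), which is disjoint from $N(x)$ since each $y_i\in N(x_i)\setminus N(x)$; and Lemma~7 applied to $z$ gives $B(z_i)=\{z\}\cup(B(z)\setminus\{z_i\})$ in case~(2). Since $v\in N(x)$ forbids $v\in B(y)$ in the first situation, and $v\in N(x)$ together with $v\notin B(z)$ forbids every element of $B(z_i)$ in the second, we get $y\notin B(v)$ respectively $z_i\notin B(v)$, contradicting $w\in B(v)$.

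Only the third subcase remains: $w$ is adjacent to $x$ and to every $x_i$, so $w\in N(x)\cap\bigcap_{i}N(x_i)$, which by the same computations equals $N(x)\setminus\{y\}=S$ in case~(1) and $N(x)\setminus B(z)=S$ in case~(2). This gives $B(v)\subseteq S$, and combined with $v\in S$ yields $B[v]\subseteq S$ as required. The main obstacle I foresee is the second subcase above: one has to isolate the unique candidate vertex $y$ or $z_i$ for $w$ and then invoke the explicit description of the $B$-set of the auxiliary type-$(A)$ vertex $x^{\star}$ afforded by Lemmas~7 and~10, rather than trying to bound $|N(v)\cap N(w)|$ by a direct count.
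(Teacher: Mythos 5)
Your proof is correct and follows essentially the same strategy as the paper's: fix $v$ in the target set, observe that $v$ is adjacent to $x$ and to every vertex of $B(x)$, and classify $w\in B(v)$ according to the unique neighbour of $v$ it misses, using Lemmas~9 and~10 to pin down the possible candidates and rule them out. The only local divergence is in the subcase $w\in N(x)\cap B(x^{\star})$ of part~(2), where the paper uses an inclusion--exclusion count giving $|N(x^{\star})\cap N(v)|\geq k-2$ and contradicts Lemma~2, while you instead read off $B(z_i)$ from Lemmas~7 and~10; both arguments work.
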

\begin{proof}
$(1)$ Let $x$ be a vertex of type  $(A1)$, and let $v$ be an arbitrary vertex from $N(x)\setminus \{x^{\star}\}$.
If $v \in N(x)\cap N(x^{\star})$, then $B[v]$ is contained in $N(x)\setminus \{x^{\star}\}$ by Lemma~8(2).
Let $v \in N(x)\setminus N[x^{\star}]$.
Thus, if $w$ is an arbitrary vertex from $B(v)$, then  $w\neq x^{\star}$ by Lemma~9(1). Moreover, if $w$ is not adjacent to $x$, then $w$ is adjacent to all vertices of $B(x)$. Since $\beta > 1$,  there exist two distinct vertices $x_i$ and $x_j$ in $B(x)$ such that $w$ belongs to $N(x_i) \cap N(x_j)$.
But $N(x_i) \cap N(x_j)\subset N(x)$ since $x$ is a vertex of type  $(A1)$. A contradiction. Thus, $w$ is adjacent to $x$ and  $B[v]$ is contained in $N(x)\setminus \{x^{\star}\}$.

$(2)$ Let $x$ be a vertex of type  $(A2)$ and $v$ be an arbitrary vertex from $N(x)\setminus B(x^{\star})$.
Then $v$ is adjacent  to  all vertices of $B[x]$.
Let there exist a vertex $w$ in $B(v)$ that does not belong to $N(x)\setminus B(x^{\star})$.

If $w$ is not adjacent to $x$, then $w$ is adjacent to a vertex
$x_i$ from $B(x)$.
Hence, by Lemma~9(2), $w\in N(x_i) \setminus N(x) = \{x^{\star}\}$.
Since  $w\in B(v)$, we have $v\in B(w)=B(x^{\star})$.
A contradiction with the choice of $v$ from $N(x)\setminus B(x^{\star})$.
Thus, $w$ is adjacent to $x$ and  $B[v]$ is contained in $N(x)\setminus N(x^{\star})$.

If $w\in N(x)\cap B(x^{\star})$, then
here exist a unique vertex $x_i\in B(x)$ such that $w\notin N(x_i)$. However, $|N(x^{\star})\cap N(w)|= |N(v)\cap N(w)| = k-1$.
Thus, $|N(x^{\star})\cap N(v)|\geq k-2$. It contradicts $v\notin  B(x^{\star})$ and Lemma~2.
Hence, $w\in N(x)\setminus B(x^{\star})$ and $B[v]\subseteq N(x)\setminus B(x^{\star})$.
\end{proof}

In the next lemma, we study vertices of types $(B)$ and $(C)$ in $\Gamma$.

\begin{lemma}\label{lemma 12}
$(1)$ Let $v$ be a vertex of type $(B)$ in $\Gamma$, and let $u$ be any vertex from $B(v)$. Then  $B[u] = B[v]$
and $u$ is of type $(B)$.

$(2)$ Let $v$ be a vertex of type $(C)$ in $\Gamma$, and let $u$ be any vertex from $B(v)$. Then  $B[u] = B[v]$
and $u$ is of type $(C)$.
\end{lemma}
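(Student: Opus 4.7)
The strategy for both parts is the same: first establish $B[v] \subseteq B[u]$ by direct computation of common-neighbour sizes; then conclude $B[u] = B[v]$ from $|B[v]| = |B[u]| = \beta + 1$; and only afterwards determine the type of $u$.

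For part $(1)$, I would re-use the observation underlying Lemma 3: any vertex $w \in N(v) \cap B(v)$ satisfies $N(w) = N[v] \setminus \{w\}$, since $w \in N(v)$, $|N(v) \cap N(w)| = k - 1$, and $\deg(w) = k$. As $v$ is of type $(B)$, this identity holds for every $w \in B(v)$. For two distinct vertices $u, w \in B(v)$ one then reads off $N(u) \cap N(w) = N[v] \setminus \{u, w\}$, of size $k - 1 = b$, so $w \in B(u)$; together with $v \in B(u)$ this yields $B[v] \subseteq B[u]$ and hence equality by cardinality. The type assertion is immediate: $B(u) = B[v] \setminus \{u\} \subseteq N[v] \setminus \{u\} = N(u)$, so $u$ is of type $(B)$.

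For part $(2)$, let $u_0$ be the unique vertex in $B(v) \cap N(v)$. The case $u = u_0$ is handled exactly as in part $(1)$, using that every other $w \in B(v)$ lies outside $N[v]$ to obtain $|N(u_0) \cap N(w)| = |N(v) \cap N(w)| = k - 1$, which gives $B[u_0] = B[v]$ and $B(u_0) \cap N(u_0) = \{v\}$. The substantive case is $u \in B(v) \setminus \{u_0\}$, where $u \notin N(v)$. The pivotal step is to show $u_0 \notin N(u)$: otherwise $N(u_0) \cap N(u) = (N(v) \cap N(u)) \setminus \{u_0\}$ would contain only $k - 2$ vertices, a value forbidden as a common-neighbour count since it is neither $b = k - 1$ nor $a$ (the latter by Lemma 2). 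The same reasoning applied to any $w \in B(v) \setminus \{u_0, u\}$ gives $u_0 \notin N(w)$, whence $N(v) \cap N(u) = N(v) \cap N(w) = N(v) \setminus \{u_0\}$. Thus $N(u) \cap N(w) \supseteq N(v) \setminus \{u_0\}$, so $|N(u) \cap N(w)| \in \{k-1, k\}$; but the value $k$ is neither $a$ nor $b$, so $|N(u) \cap N(w)| = k - 1 = b$ and $w \in B(u)$. Together with $u_0, v \in B(u)$, this yields $B[v] \subseteq B[u]$ and hence equality.

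To identify the type of $u$ in this substantive case, I would argue by elimination rather than directly. The relation $v \in B(u) \setminus N(u)$ already rules out type $(B)$. If $u$ were of type $(A)$, then Lemma 7 applied to $u$ would force $v \in B(u)$ to be of type $(A)$, contradicting the hypothesis that $v$ is of type $(C)$. Hence $u$ must be of type $(C)$. I expect the main technical hurdle to be precisely the step $u_0 \notin N(u)$: it is what collapses every subsequent common-neighbour count to $b$, and it rests delicately on Lemma 2 forbidding strictly Deza graphs with parameters $(n, k, k-1, k-2)$.
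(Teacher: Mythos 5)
Your proof is correct, and part $(1)$ coincides with the paper's argument. Part $(2)$ is organized differently: the paper argues by contradiction, assuming $B[u]\neq B[v]$, first pinning down the type of $u$ by elimination, introducing the partner $u^{\star}\in N(u)\cap B(u)$, and then showing a putative $w\in B[u]\setminus B[v]$ would in fact lie in $B(v)$; you instead prove the inclusion $B[v]\subseteq B[u]$ directly by computing every common-neighbour count, conclude equality from $|B[u]|=|B[v]|=\beta+1$, and only then identify the type. Both routes rest on the same ingredients --- the uniqueness of $v^{\star}=u_0$ with $N[u_0]=N[v]$, Lemma~2 to forbid the value $k-2$, the constancy of $\beta$, and Lemma~7 together with part $(1)$ to eliminate types $(A)$ and $(B)$ --- but your direct version has the small advantage of never needing to locate $u^{\star}$ or verify $N(u)\setminus\{u^{\star}\}=N(u)\cap N(v)$. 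One remark: your ``pivotal step'' $u_0\notin N(u)$ follows immediately from $N[u_0]=N[v]$ and $u\notin N[v]$ (this is how the paper gets it), so the detour through Lemma~2 at that particular point, while valid, is not actually needed; Lemma~2 is genuinely needed only where a count of $k-2$ must be excluded as a possible value of $a$.
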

\begin{proof}
$(1)$  If $v$ is a vertex of type $(B)$ in $\Gamma$, then,  for each $u \in B(v)$, we have $N[u] = N[v]$ by the definition of $B[v]$.
Therefore, $B[u]$ contains each vertex from $B[v]$. In view of Lemma~1, $|B[v]| = \beta = |B[u]|$, and we have the equality $B[u] = B[v]$.
Hence, $B[u]\subseteq N[u]$ and $u$ is of type $(B)$ for any $u \in B[v]$.

$(2)$  Let $v$ be a vertex of type $(C)$ in $\Gamma$.  Let $u\in B(v)$ and $B[u] \neq B[v]$.
By the definition of a vertex of type $(C)$,  there exists a unique vertex $v^{\star} \in N(v)\cap B(v)$ such that
$N[v] = N[v^{\star}]$.  Thus, $u\neq v^{\star}$ and $u\notin N[v]$. Furthermore, $N(u)\cap N(v) = N(v)\setminus \{v^{\star}\}$.

By Lemma~7 and by the previous statement,  a vertex $u$ can not be of types $(A)$ and $(B)$. Hence, $u$ is of type $(C)$
and there exists $u^{\star} \in N(u)\cap B(u)$ such that $N[u] = N[u^{\star}]$.

Let $w\in B[u]\setminus B[v]$. Clearly, $u^{\star} \in B(v)$ and, therefore, $w\neq u^{\star}$. Then
$N(w)\cap N(u) = N(u)\setminus \{u^{\star}\} = N(u)\cap N(v) = N(v)\setminus \{v^{\star}\}$. But this contradicts  $w\notin B[v]$. Hence, $B[u] = B[v]$ and $u$ is a vertex of type $(C)$.

\end{proof}
\begin{lemma}\label{lemma 13}
For any vertices $v$ and $u$ of $\Gamma$, if $B[v] \cap B[u] \neq \emptyset$, then $B[v] = B[u]$.
\end{lemma}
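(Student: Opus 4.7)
The plan is to show that the relation ``belonging to a common block $B[\cdot]$'' is an equivalence relation on $V(\Gamma)$. The decisive observation is that Lemmas~7 and~12 together provide a uniform statement: for every vertex $v\in V(\Gamma)$ and every $w\in B(v)$, the equality $B[v]=B[w]$ holds (with $v$ and $w$ sharing the same type $(A)$, $(B)$, or $(C)$). Thus once this is noted, the lemma reduces to chasing cases according to where the shared vertex of $B[v]\cap B[u]$ lies.

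Concretely, I would first dispose of the trivial case $v=u$. Otherwise, I would pick $w\in B[v]\cap B[u]$ and split into three subcases: (i) $w=v$, in which case $v\in B[u]$; since $v\ne u$ gives $v\in B(u)$, Lemma~7 (if $u$ is of type $(A)$) or Lemma~12 (if $u$ is of type $(B)$ or $(C)$) yields $B[u]=B[v]$; (ii) $w=u$, handled symmetrically; (iii) $w\ne v$ and $w\ne u$, so $w\in B(v)\cap B(u)$, and applying the same pair of lemmas to $w$ gives $B[v]=B[w]=B[u]$.

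Because every vertex of $\Gamma$ is classified as type $(A)$, $(B)$, or $(C)$ by Lemma~3, no vertex is left outside the scope of Lemmas~7 and~12; moreover, these lemmas also guarantee internal consistency (all vertices of $B[w]$ share the type of $w$), so no contradiction can arise from mixing types. I do not expect a genuine obstacle here: the essential work has already been carried out in Lemmas~7 and~12, and what remains is a short case analysis on where the common vertex sits.
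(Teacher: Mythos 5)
Your proposal is correct and follows essentially the same route as the paper, which simply declares the lemma ``a direct corollary of Lemmas~7 and 12''; you have merely made explicit the case analysis (whether the common vertex equals $v$, equals $u$, or lies in $B(v)\cap B(u)$) that the paper leaves implicit. No gap: Lemma~3 guarantees every vertex is of type $(A)$, $(B)$, or $(C)$, so Lemmas~7 and~12 together cover all cases exactly as you argue.
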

\begin{proof}
This lemma is a direct corollary of Lemmas~7 and 12.
\end{proof}
\begin{lemma}\label{lemma 14}
 If $v$ is a vertex of type $(B)$ or $(C)$ of $\Gamma$, then for any vertex $u \in N(v)\setminus B(v)$ we have
 $ B[u]\subseteq N(v)\setminus B(v)$.
\end{lemma}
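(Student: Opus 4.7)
The plan is to fix an arbitrary $w\in B(u)$ and show that $w\in N(v)\setminus B(v)$; combined with $u\in N(v)\setminus B(v)$, this yields the desired inclusion $B[u]\subseteq N(v)\setminus B(v)$.

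The first observation is that $w\notin B[v]$. Indeed, $w\in B[u]$, and if we also had $w\in B[v]$, Lemma~13 would force $B[u]=B[v]$, hence $u\in B[v]$; but $u\in N(v)\setminus B(v)$ and $u\neq v$, so $u\notin B[v]$. Thus $w\neq v$ and $w\notin B(v)$, and the only thing left is to show $w\sim v$.

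Suppose towards a contradiction that $w\not\sim v$. The key structural claim is that $u$ is adjacent to every vertex of $B[v]$, whereas $w$ is adjacent to none of them. For a vertex $v$ of type $(B)$, every $v'\in B[v]$ satisfies $N[v']=N[v]$ (the identity already used in the proof of Lemma~12(1)); since $u\in N(v)$, this gives $u\sim v'$ for every $v'\in B[v]$, and since $w\notin N[v]$, it gives $w\not\sim v'$ for every such $v'$. For a vertex $v$ of type $(C)$, write $B[v]=\{v,v^{\star},v_1,\dots,v_{\beta-1}\}$, where $v^{\star}$ is the unique vertex of $N(v)\cap B(v)$ and each $v_i$ lies in $N_2(v)$. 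By Lemma~12(2) each $v_i$ is itself of type $(C)$ with $B[v_i]=B[v]$, and combining $|N(v_i)\cap N(v)|=k-1$, $v_i\not\sim v^{\star}$, and the existence of the ``twin'' $v_i^{\star}\in N(v_i)\cap B(v_i)$ forces $N(v_i)\subseteq N[v]\cup B[v]$. Together with $N[v^{\star}]=N[v]$, this shows that $u\in N(v)\setminus\{v^{\star}\}$ is adjacent to every element of $B[v]$, while $w\notin N[v]\cup B[v]$ is adjacent to none of them.

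Given this claim, the contradiction is a one-line count: $u$ has $\beta+1$ neighbours in $B[v]$ and therefore $k-\beta-1$ neighbours outside $B[v]$, and since $w$ has no neighbour in $B[v]$,
\[
|N(u)\cap N(w)|\ \leq\ k-\beta-1.
\]
But $w\in B(u)$ forces $|N(u)\cap N(w)|=b=k-1$, giving $\beta\leq 0$ and contradicting the standing assumption $\beta>1$. Hence $w\sim v$, and the proof is complete. The main obstacle I expect is the type~$(C)$ analysis: one has to extract from Lemma~12(2) that each $v_i$'s unique neighbour outside $N(v)$ must lie in $B[v]$, which is precisely the step that makes the disjointness $N(w)\cap B[v]=\emptyset$ work and, through it, the counting contradiction.
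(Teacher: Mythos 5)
Your proof is correct and rests on the same counting mechanism as the paper's: a vertex $w\in B(u)$ is adjacent to $b=k-1$ of the $k$ neighbours of $u$, so it can miss at most one of them, yet it is forced to miss at least two. The only real difference is where those missed neighbours are taken from: you prove the stronger claim that $u$ is adjacent to \emph{all} of $B[v]$ while $w$ is adjacent to none of it, which in the type $(C)$ case requires the extra analysis of the vertices of $B(v)\cap N_2(v)$ via Lemma~12(2) (showing each $v_i$'s unique neighbour outside $N(v)$ lies in $B[v]$); the paper instead uses only the set $N[v]\cap B[v]$ --- all of $B[v]$ for type $(B)$, but just $\{v,v^{\star}\}$ for type $(C)$ --- for which the identity $N[v']=N[v]$ makes both ``$u$ adjacent, $w$ not adjacent'' facts immediate, since $|N[v]\cap B[v]|\geq 2$ already suffices for the contradiction. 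Your stronger intermediate claim is correct, but the paper's choice sidesteps that analysis entirely.
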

\begin{proof}
If $v$ is a vertex of type $(B)$ or $(C)$ in $\Gamma$, then we have
$N[v] = N[v']$ for any $v'\in N[v]\cap B[v]$. Let $u \in N(v)\setminus B(v)$. In this case,  $N(u)$ contains $N[v]\cap B[v]$.  Let $u' \in B(u)$ and $u' \notin N(v)\setminus B(v)$.  By Lemma~13, $u' \notin B[v]$. Therefore, $u' \notin N[v]$. But $u'$ is adjacent to $k - 1$ vertices from $N(u)$. Since $|N[v]\cap  B[v]| > 1$, we find that $N(u')\cap B(v) \neq \emptyset$ and $u'$ is adjacent to some vertex $v'\in B[v]$. This fact contradicts $N[v] = N[v']$  for any $v'\in N[v]\cap B[v]$.
\end{proof}
\begin{lemma}\label{lemma 15} Let $v$ be a vertex of $\Gamma$.
Then the following statements hold.

$(1)$ If $v$ is of type  $(A1)$ or $(C)$ in $\Gamma$, then $\beta + 1$ divides $k - 1$.

$(2)$ If $v$ is of type  $(A2)$ or $(B)$ in $\Gamma$, then $\beta + 1$ divides $k - \beta$.
\end{lemma}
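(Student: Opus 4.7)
The plan is to use Lemmas~11, 13, and 14 to exhibit, in each case, a subset $S\subseteq N(v)$ that is closed under the equivalence relation ``belong to the same $B[\cdot]$ class'' and whose size I can compute directly from the type of $v$. By Lemma~13 the sets $B[u]$ partition $V(\Gamma)$, and each block has size $\beta+1$ (by Lemma~1, or directly from the definition of $B[v]$). So once I show that $S$ is a union of such blocks, divisibility follows by counting $|S|/(\beta+1)$.

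For type $(A1)$, set $S=N(x)\setminus\{x^{\star}\}$, which has $k-1$ elements. Lemma~11(1) says $B[u]\subseteq S$ for every $u\in S$, so $S$ is a union of $B[\cdot]$-blocks, each of size $\beta+1$; hence $(\beta+1)\mid(k-1)$. For type $(C)$ I take $S=N(v)\setminus B(v)$, of size $k-1$ (since exactly one neighbour of $v$ lies in $B(v)$ by the definition of type $(C)$); Lemma~14 gives $B[u]\subseteq S$ for every $u\in S$, and again $(\beta+1)\mid(k-1)$. This proves $(1)$.

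For type $(A2)$, I take $S=N(x)\setminus B(x^{\star})$. The point is that $x^{\star}$ is itself of type $(A)$ by Lemma~10, and by the description of $B(x^{\star})=\{z_1,\dots,z_{\beta}\}$ in Lemma~10 we have $B(x^{\star})\subseteq N(x)$, so $|S|=k-\beta$. Then Lemma~11(2) gives $B[u]\subseteq S$ for every $u\in S$, so $(\beta+1)\mid(k-\beta)$. For type $(B)$ I take $S=N(v)\setminus B(v)$; here $B(v)\subseteq N(v)$ by the definition of type $(B)$, so $|S|=k-\beta$, and Lemma~14 again gives $B[u]\subseteq S$ for every $u\in S$. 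This yields $(\beta+1)\mid(k-\beta)$ and finishes $(2)$.

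No step looks hard: the lemma is really a bookkeeping corollary of the structural results already established. The only subtle point to double-check is the size of $B(x^{\star})\cap N(x)$ in the $(A2)$ case, where I must quote Lemma~10 to know that every vertex of $B(x^{\star})$ lies in $N(x)$, so that $|N(x)\setminus B(x^{\star})|$ really equals $k-\beta$ and not something smaller.
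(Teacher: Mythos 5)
Your proof is correct and follows exactly the paper's route: partition $N(v)$ minus the exceptional vertices into $B[\cdot]$-blocks of size $\beta+1$ using Lemmas~11, 13, and 14, and count. The paper states this in two sentences without the bookkeeping; you have simply made explicit the set $S$ and its cardinality in each of the four cases (including the correct observation that $B(x^{\star})\subseteq N(x)$ with $|B(x^{\star})|=\beta$ in the $(A2)$ case), which is exactly what the paper's citation of those lemmas is meant to convey.
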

\begin{proof}
By Lemma~13, the sets
$B[v]$ and $B[u]$ either coincide or do not intersect for any two vertices $u$ and $v$.
Since $|B[v]| = |B[u]| = \beta + 1$,  statements  $(1)$ and  $(2)$ follow from Lemma~11 and Lemma~14.
\end{proof}

\begin{lemma}\label{lemma 16}
Types of all the vertices of $\Gamma$ satisfy only statement $(1)$ or only statement $(2)$ of Lemma~15.
\end{lemma}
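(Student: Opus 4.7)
The plan is to derive a contradiction from the divisibility conditions provided by Lemma~15 in conjunction with the standing hypothesis $\beta > 1$. Lemma~15 sorts the four possible vertex types into two classes: types $(A1)$ and $(C)$ force $\beta+1 \mid k-1$, while types $(A2)$ and $(B)$ force $\beta+1 \mid k-\beta$. So I need to show that the two classes cannot both be populated.

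Assume for contradiction that $\Gamma$ contains a vertex $v$ whose type satisfies Lemma~15(1) and a vertex $u$ whose type satisfies Lemma~15(2). Then $\beta+1$ simultaneously divides $k-1$ and $k-\beta$. Subtracting one divisibility from the other, $\beta+1$ divides
\[
(k-1)-(k-\beta) = \beta - 1 = (\beta+1) - 2,
\]
and therefore $\beta+1$ divides $2$. This forces $\beta+1 \in \{1,2\}$, that is, $\beta \le 1$, which contradicts the standing hypothesis $\beta > 1$ fixed at the beginning of Section~3.

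Hence every vertex of $\Gamma$ must fall under the same one of the two conclusions of Lemma~15, which is precisely the assertion of Lemma~16. There is essentially no obstacle here: once Lemma~15 is in hand, Lemma~16 is a two-line arithmetic consequence, and the only thing to double-check is that the four allowed types $(A1)$, $(A2)$, $(B)$, $(C)$ are exhaustive (this has already been established, respectively, through Lemma~9 for the type~$(A)$ dichotomy and the definitions preceding Lemma~4 for types~$(B)$ and~$(C)$), so no vertex of $\Gamma$ escapes the dichotomy.
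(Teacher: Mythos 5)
Your proof is correct and follows essentially the same route as the paper: assume both classes of types occur, deduce that $\beta+1$ divides both $k-1$ and $k-\beta$, hence their difference $\beta-1$, and derive a contradiction with $\beta>1$. The paper leaves the final arithmetic step implicit, whereas you spell it out ($\beta+1$ divides $2$), but the argument is the same.
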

\begin{proof}
If   $\Gamma$ contains vertices of types  satisfying different statements of
Lemma~15, then $\beta + 1$ divides both $k - 1$ and $k - \beta$. But this contradicts $\beta > 1$.
\end{proof}
\begin{lemma}\label{lemma 17}
A graph $\Gamma$ all of whose vertices are of type $(B)$ does not exist.
\end{lemma}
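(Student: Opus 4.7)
My plan for Lemma~17 is a contradiction argument: assume every vertex of $\Gamma$ is of type $(B)$, set $m := \beta + 1 \geq 3$, and push this to a divisibility contradiction of the form $m \mid 2$. The preparatory step is to establish a clean partition of $V(\Gamma)$. By Lemma~12(1) every $u \in B(v)$ satisfies $N[u] = N[v]$, and together with Lemma~13 this gives that the sets $B[v]$ partition $V(\Gamma)$ into cliques of size $m$ on which the closed neighborhood is constant. Consequently, for any two distinct classes $C$ and $C'$, either every vertex of $C$ is adjacent to every vertex of $C'$, or none is. Lemma~14 then says $N(v) \setminus B(v)$ is a union of entire classes, and combined with Lemma~15(2) we may write $N(v) = B(v) \sqcup C_1 \sqcup \cdots \sqcup C_d$ with $d = (k - \beta)/m$.

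I will next dispose of two trivial cases: if $d = 0$ then $\Gamma = K_m$, and if $v$ has no non-neighbor in $V(\Gamma)$ then $\Gamma = K_n$; both are strongly regular and therefore excluded. So I may assume $d \geq 1$ and pick $y \notin N[v]$ whose class $C'$ lies outside $\{B[v], C_1, \ldots, C_d\}$. For $x \in C_1$, since the classes $B[v]$ and $C_1$ are completely joined, $B[v] \subseteq N(x)$, and the all-or-nothing class adjacency gives
\[
|N(v) \cap N(x)| = |C_1 \setminus \{x\}| + |B(v)| + m\ell = 2(m-1) + m\ell,
\]
where $\ell$ counts the classes among $C_2, \ldots, C_d$ that are joined to $C_1$. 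By contrast, $B[v]$ and $C'$ are not joined, so $B[v] \cap N(y) = \emptyset$ and $N(v) \cap N(y)$ decomposes as a union of whole classes drawn from $\{C_1, \ldots, C_d\}$, giving $|N(v) \cap N(y)| = m\ell'$. Neither intersection can equal $b$ (else $x$ or $y$ would lie in $B[v]$, contradicting the class partition), so both equal $a$.

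Equating the two expressions yields $2(m-1) + m\ell = m\ell'$, i.e.\ $m \mid 2(m-1)$, hence $m \mid 2$ and $m \leq 2$, contradicting $m \geq 3$. The main obstacle is the partition setup, and specifically verifying the two structural facts used in the computation: that class-adjacency between distinct $B[\cdot]$-classes is an all-or-nothing relation, and that $B[v] \subseteq N(x)$ for any $x$ in a class joined to $B[v]$. Once those are pinned down from Lemmas~12--14, the modular contradiction is a one-line calculation.
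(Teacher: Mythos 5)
Your proof is correct and follows essentially the same route as the paper: both arguments reduce to showing that $\beta+1$ divides $a$ (via a non-neighbour of $v$) and also divides $a-2\beta$ (via a neighbour of $v$ outside $B(v)$), which forces $\beta+1\mid 2$ and contradicts $\beta>1$. Your explicit decomposition of $N(v)$ into whole $B[\cdot]$-classes and your handling of the degenerate cases $d=0$ and $\Gamma=K_n$ are just a slightly more careful packaging of the paper's computation.
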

\begin{proof}
 Let all the vertices of $\Gamma$ be of type $(B)$ and $v \in V(\Gamma)$. By Lemma~14, if $u \in N(v)\setminus B(v)$, then the set  $B[u]$ is contained in $N(v)\setminus B(v)$.

For any $u$ in $(N(v_1)\setminus B(v_1))\cap (N(v_2)\setminus B(v_2))$ the set $B[u]$ is contained in
$(N(v_1)\setminus B(v_1))\cap (N(v_2)\setminus B(v_2))$.

If $v_1$ is adjacent to $v_2$, then $N(v_1)\cap N(v_2)$ contains $B(v_1)$, $B(v_2)$, and $B[u]$ for any $u\in (N(v_1)\setminus B(v_1))\cap (N(v_2)\setminus B(v_2))$. Hence, by Lemma~13, $\beta +1$ divides $|(N(v_1)\setminus B(v_1))\cap (N(v_2)\setminus B(v_2))| = a - 2\beta$.

If $v_1$ is not adjacent to $v_2$, then $(N(v_1)\cap N(v_2))\cap (B(v_1)\cup B(v_2)) = \emptyset$.
Hence, by Lemma~13,  $\beta +1$ divides $|N(v_1)\cap N(v_2)| = a$.

Since $\Gamma$ is a strictly Deza graph, we have both possibilities for $v_1$ and $v_2$ to be adjacent or to be nonadjacent.
Comparing these two cases, we have $\beta +1$ divides $2\beta$.
However, this contradicts the condition $\beta > 1$.
\end{proof}
\begin{lemma}\label{lemma 18}
 There are no vertices of type  $(A1)$ in $\Gamma$.
\end{lemma}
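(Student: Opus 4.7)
The plan is to suppose, for contradiction, that $\Gamma$ contains a vertex $x$ of type $(A1)$, and to force a contradiction by computing $|N(u) \cap N(x)|$ modulo $\beta + 1$ for a common neighbour $u$ of $x$ and $y := x^{\star}$. First, by Lemma~10 the partner $y$ is also of type $(A1)$; by Lemma~4$(2)$ we have $a > 0$, so I can fix some $u \in N(x) \cap N(y)$, and since $x$ is of type $(A)$ we have $u \in A(x)$ and hence $|N(u) \cap N(x)| = a$. Applying Lemma~11$(1)$ to both $x$ and $y$ forces $B[u] \subseteq (N(x) \setminus \{y\}) \cap (N(y) \setminus \{x\}) = N(x) \cap N(y)$, so by Lemma~13 the set $N(x) \cap N(y)$ is a disjoint union of $B$-classes. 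Combined with Lemma~15$(1)$ this yields $(\beta + 1) \mid k - 1$ and $(\beta + 1) \mid a$; in particular $a \equiv 0 \pmod{\beta + 1}$.

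Next I would use the partition
$$N(x) = \{y\} \sqcup (N(x) \cap N(y)) \sqcup (N(x) \setminus N[y])$$
to compute $|N(u) \cap N(x)|$ modulo $\beta + 1$. By Lemma~16, $u$ has type $(A1)$ or $(C)$, and the key structural point is that Lemmas~11$(1)$ and~14 respectively force every $B$-class $C'$ distinct from $B[u]$ (and from $B[u^{\star}]$ in the $(A1)$ case) to meet $N(u)$ in either $0$ or $\beta + 1$ vertices. In the type $(C)$ case, $B[u] \subseteq N(x) \cap N(y)$ contributes exactly the single vertex $u^{\star}$ and $y$ contributes one more, so $|N(u) \cap N(x)| \equiv 2 \pmod{\beta + 1}$; since $\beta + 1 \geq 3$, this contradicts $(\beta + 1) \mid a$.

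In the type $(A1)$ case I would first rule out $u^{\star} \in B[x] \cup B[y]$: every vertex of $B[x]$ is adjacent to all of $N(x) \setminus \{y\} \supseteq B(u)$ and every vertex of $B[y]$ to all of $N(y) \setminus \{x\} \supseteq B(u)$, whereas $u^{\star}$ by definition is adjacent to no vertex of $B(u)$. Consequently $u^{\star}$ lies in exactly one of the four regions
$$N(x) \cap N(y),\quad N(x) \setminus N[y],\quad N(y) \setminus N[x],\quad V(\Gamma) \setminus (N[x] \cup N[y] \cup B[x] \cup B[y]).$$
In the first two regions $u^{\star} \in N(x)$, so $u^{\star}$ contributes one extra vertex and $|N(u) \cap N(x)| \equiv 2 \pmod{\beta + 1}$; in the last two $u^{\star} \notin N(x)$, so $|N(u) \cap N(x)| \equiv 1 \pmod{\beta + 1}$. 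Both residues contradict $(\beta + 1) \mid a$, so no such $u$ exists; this forces $a = 0$, contradicting Lemma~4$(2)$.

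The main obstacle will be the bookkeeping for $u^{\star}$ in the type $(A1)$ case: one must exclude $u^{\star} \in B[x] \cup B[y]$, localise $u^{\star}$ in the four-fold partition of $V(\Gamma)$ above, and check that the $B$-class $B[u^{\star}]$ is contained entirely in the same region as $u^{\star}$ so that its contribution to $|N(u) \cap N(x)|$ modulo $\beta + 1$ is exactly $1$ if $u^{\star} \in N(x)$ and $0$ otherwise.
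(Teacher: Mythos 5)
Your proposal is correct and takes essentially the same route as the paper's proof: both fix $u\in N(x)\cap N(x^{\star})$, use Lemma~8(2)/Lemma~11(1) and Lemma~13 to get $(\beta+1)\mid a$, and then show via Lemma~11(1) and Lemma~14 that $|N(u)\cap N(x)|=a$ is congruent to $1$ or $2$ modulo $\beta+1$ (the paper phrases this as $\beta+1$ dividing $a-1$ or $a-2$ after deleting $x^{\star}$ and $u^{\star}$ or $u_{\star}$), contradicting $\beta>1$. The only cosmetic differences are your more explicit three-part partition of $N(x)$ and the four-region localisation of $u^{\star}$, plus the tacitly used (but easily verified) fact that $(x^{\star})^{\star}=x$.
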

\begin{proof} Let a vertex $x$ be of type  $(A1)$ in $\Gamma$.  Fix the vertex $x$. In view of Lemma~16,
each vertex $u$ in $\Gamma$ is either of type   $(A1)$ or of type $(C)$.

By Lemma~9, not only for $x$ but for each vertex $u$ of $\Gamma$ of type  $(A1)$, we have
 a unique vertex $u^{\star}\in N(u)$ such that $N(u^{\star})\cap B(u) = \emptyset$. Moreover,  by Lemma~11(1),
   for any vertex $w\in N(u)\setminus \{u^{\star}\}$ we have $B[w]$ is contained in $N(u)\setminus \{u^{\star}\}$.

By the definition, if a vertex $u$ of $\Gamma$ is of type $(C)$, then $|N(u)\cap B(u)|=1$.
We  denote the vertex from $N(u)\cap B(u)$ by $u_{\star}$. In this case, we have $N[u] = N[u_{\star}]$.
Moreover,  by Lemma~14, for any vertex $w\in N(u)\setminus \{u_{\star}\}$ we have $B[w]$
is contained in $N(u)\setminus \{u_{\star}\}$.

By Lemma~4(1), $N(x)\cap N(x^{\star})\neq \emptyset$. Let  $u\in N(x)\cap N(x^{\star})$.
Seeing the vertex $x$ is of type  $(A1)$, we have  $u\notin B[x]$ and $|N(x)\cap N(u)|=a$.

If $u$ is of type $(C)$, then for each vertex $w \in (N(x) \cap N(u))\setminus \{x^{\star}, u_{\star}\}$ the set $B[w]$ is
contained in $(N(x) \cap N(u))\setminus \{x^{\star}, u_{\star}\}$. Hence, $|B[w]| = \beta +1$ devides
$|(N(x) \cap N(u))\setminus \{x^{\star}, u_{\star}\}|$ which is equal to $a-2$.

If $u$ is of type  $(A1)$, then  for each vertex $w \in (N(x) \cap N(u))\setminus \{x^{\star}, u^{\star}\}$ the set $B[w]$ is
contained in $(N(x) \cap N(u))\setminus \{x^{\star}, u^{\star}\}$. Hence, $|B[w]| = \beta +1$ devides
$|(N(x) \cap N(u))\setminus \{x^{\star}, u^{\star}\}|$
which is equal to $a-2$ if $u^{\star}$ is adjacent $x$ or is equal to $a-1$ if $u^{\star}$ is nonadjacent $x$.

By lemma 8(2), $B[w]$ is contained in $N(x) \cap N(x^{\star})$ for any vertex $w$ from $N(x) \cap N(x^{\star})$.
Hence,   by lemma 13, $|B[w]| = \beta +1$ devides $|N(x) \cap N(x^{\star})| = a$.
On the other hand,  $\beta +1$ devides $a-2$ or $a-1$.
However, this contradicts  $\beta > 1$.
\end{proof}

\begin{lemma}\label{lemma 19}
There are no vertices of type  $(A2)$ in $\Gamma$.
\end{lemma}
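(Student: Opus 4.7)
The plan is to assume for contradiction that $x$ is a vertex of type $(A2)$ and derive two incompatible divisibility constraints on $\beta+1$, in the spirit of Lemma~18. By Lemmas~16 and~18, every vertex of $\Gamma$ is then of type $(A2)$ or $(B)$. I would begin by establishing the symmetric structure around $x$: combining Lemma~10(2) with the formula $N(x_i)=\{x^{\star}\}\cup(N(x)\setminus\{z_i\})$ coming from Lemma~9(2), one obtains $N(z_i)=\{x\}\cup(N(x^{\star})\setminus\{x_i\})$, so that $x^{\star}$ is itself of type $(A2)$ with $(x^{\star})^{\star}=x$, and every $u\in N(x)\cap N(x^{\star})$ is adjacent to each vertex of $B(x^{\star})$.

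The first divisibility comes from $N(x)\cap N(x^{\star})$: applying Lemma~11(2) to both $x$ and $x^{\star}$, for each $v$ in this set one has $B[v]\subseteq N(x)\cap N(x^{\star})$, so by Lemma~13 the set (of cardinality $a$, since $x^{\star}\in A(x)$ by Lemma~6) is a disjoint union of blocks of size $\beta+1$, yielding $\beta+1\mid a$.

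The second divisibility comes from a local analysis at a fixed $u\in N(x)\cap N(x^{\star})$ (nonempty by Lemma~4(2)); note $|N(x)\cap N(u)|=a$ and $B(x^{\star})\subseteq N(x)\cap N(u)$. If $u$ is of type $(B)$, then $B[u]\subseteq N(x)\cap N(x^{\star})$ forces $B(u)\subseteq N(x)$, the blocks $B(x^{\star})$ and $B(u)$ are disjoint by Lemma~13, and Lemma~11(2) for $x$ together with Lemma~14 for $u$ exhibits $(N(x)\cap N(u))\setminus(B(x^{\star})\cup B(u))$ as a disjoint union of blocks, so $\beta+1\mid a-2\beta$ and hence $\beta+1\mid 2$, contradicting $\beta>1$. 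If $u$ is of type $(A2)$, I split on whether $u^{\star}\in N(x)$ or $u^{\star}\in N_2(x)$: the analogous formula $N(v_j)=\{u\}\cup(N(u^{\star})\setminus\{u_j\})$ gives $B(u^{\star})\subseteq N(x)$ in the first subcase and $B(u^{\star})\cap N(x)=\emptyset$ in the second. In the first subcase $B(x^{\star})\cap B(u^{\star})=\emptyset$ since $u^{\star}\neq z_i$ (because $z_i\sim u$ while $u^{\star}\not\sim u$), and Lemma~11(2) applied to both $x$ and $u$ produces $\beta+1\mid a-2\beta$, again forcing $\beta+1\mid 2$. In the second subcase every $w\in(N(x)\cap N(u))\setminus B(x^{\star})$ automatically lies in $N(u)\setminus B(u^{\star})$, so Lemma~11(2) for $x$ and $u$ gives $B[w]\subseteq(N(x)\cap N(u))\setminus B(x^{\star})$, yielding $\beta+1\mid a-\beta$ and hence $\beta+1\mid\beta$, which is impossible.

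The main obstacle is the careful bookkeeping of which special blocks $B(x^{\star})$, $B(u)$, $B(u^{\star})$ sit inside $N(x)\cap N(u)$ and verifying their pairwise disjointness or complete avoidance of $N(x)$. Everything ultimately traces back to the explicit $K_{\beta+1,\beta+1}$-minus-a-matching bipartite structure between $B[x]$ and $B[x^{\star}]$ forced by Lemma~9(2), and to tracking the adjacencies between the blocks $B[u]$ or $B[u^{\star}]$ and the pair $(x,x^{\star})$.
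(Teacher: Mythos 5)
Your proposal is correct and follows essentially the same route as the paper's proof: derive $\beta+1\mid a$ from the block decomposition of $N(x)\cap N(x^{\star})$, then fix $u\in N(x)\cap N(x^{\star})$ and split on whether $u$ is of type $(B)$ or $(A2)$ (with the further sub-split on the location of $B(u^{\star})$) to get $\beta+1\mid a-2\beta$ or $\beta+1\mid a-\beta$, contradicting $\beta>1$. Your extra bookkeeping on the disjointness of the blocks $B(x^{\star})$, $B(u)$, $B(u^{\star})$ inside $N(x)\cap N(u)$ is sound and, if anything, makes the argument slightly more explicit than the original.
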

\begin{proof}
Let $x$ be a vertex of type  $(A2)$ in $\Gamma$. Fix the vertex $x$.  In this case, by Lemma~16, any vertex $v$ of
$\Gamma$ is of type  $(A2)$ or $(B)$.

By Lemmas~9 and 11(2), not only for $x$ but for each vertex $u$ of $\Gamma$ of type  $(A2)$, we have
 only vertex $u^{\star}\notin N(u)$ such that $B(u^{\star})\subseteq N(u)$. Moreover,  by Lemma~11(2)
   for any vertex $w\in N(u)\setminus B(u^{\star})$ we have $N(u)\setminus B(u^{\star})$  contains $B[w]$.

By the definition, if a vertex $u$ of $\Gamma$ is of type $(B)$, then $B(u)\subseteq N(u)$.
Moreover,  by Lemma~14 for any vertex $w\in N(u)\setminus B(u)$ we have $N(u)\setminus B(u)$
contains $B[w]$.

By Lemma~4(1), $N(x)\cap N(x^{\star})\neq \emptyset$. Let  $u\in N(x)\cap N(x^{\star})$.
Seeing the vertex $x$ is of type  $(A2)$, in view of Lemma~8, we have  $u\notin B[x]$, $|N(x)\cap N(u)|=a$, and $B(x^{\star})\subseteq N(x) \cap N(u)$.

If $u$ is of type $(B)$, then $(B(x^{\star})\cup B(u))\subseteq (N(x) \cap N(u))$.
Moreover, for each vertex $w \in (N(x) \cap N(u))\setminus (B(x^{\star}) \cup B(u))$ the set $B[w]$ is
contained in $(N(x) \cap N(u))\setminus (B(x^{\star})\cup B(u))$. Hence, $|B[w]| = \beta +1$ devides
$|(N(x) \cap N(u))\setminus (B(x^{\star}) \cup B(u))|$ which is equal to $a-2\beta$.

If $u$ is of type  $(A2)$, then $B(x^{\star})\subseteq N(x) \cap N(u)$.
By Lemma~11(2), $B(u^{\star})\subseteq N(x) \cap N(u)$ or $B(u^{\star})\cap N(x) \cap N(u) = \emptyset$.
Moreover, for each vertex $w \in (N(x) \cap N(u))\setminus (B(x^{\star}) \cup B(u^{\star}))$ the set $B[w]$ is
contained in $(N(x) \cap N(u))\setminus (B(x^{\star}) \cup B(u^{\star}))$. Hence, by Lemma~13,  $|B[w]| = \beta +1$ devides
$|(N(x) \cap N(u))\setminus (B(x^{\star}) \cup B(u^{\star}))|$,
which is equal to $a-2\beta $ or $a-\beta $.

By lemmas 8(2)  and 13, $\beta +1$ devides $|N(x) \cap N(x^{\star})| = a$.
On the other hand, $\beta +1$ devides $a-2\beta$ or $a-\beta $.
However, this contradicts  $\beta > 1$.
\end{proof}

\begin{lemma}\label{last}
If all vertices in $\Gamma$ are of  type $(C)$, then $\Gamma$ is isomorphic to the $2$-clique extension of a complete multipartite graph with parts of size ${\displaystyle\frac{n - k+1}{2}}$.
\end{lemma}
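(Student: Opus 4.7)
The plan is to exhibit $\Gamma$ as the 2-clique extension of a complete multipartite graph by exploiting the rigid structure imposed by all vertices being of type $(C)$. For each vertex $v$ the unique $v^\star \in N(v)\cap B(v)$ satisfies $N[v]=N[v^\star]$, so I will treat $\{v,v^\star\}$ as a \emph{twin pair}. First I will use Lemma~13 to partition $V(\Gamma)$ into classes $C_1,\dots,C_s$ of the form $B[v]$, each of size $\beta+1$. Since $N(v)\cap B[v]=\{v^\star\}$, the subgraph induced on each $C_i$ is a perfect matching of twin pairs (so in particular $\beta+1$ is even).

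Next I will analyze the external part $N(v)\setminus C_i$ of a vertex $v\in C_i$: by Lemma~14 it is a disjoint union of full classes $C_l$. For two vertices $u,v\in C_i$ in different twin pairs one has $|N(u)\cap N(v)|=b=k-1$; since each external neighborhood is a union of classes of total size $k-1$, their intersection of size $k-1$ forces them to coincide. Combined with $N[v]=N[v^\star]$, this shows the external neighborhood is a function of $C_i$ alone, so for each pair $i\neq j$ the bipartite subgraph between $C_i$ and $C_j$ is either complete or empty.

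The main obstacle is ruling out the empty case, and I expect to do this by a divisibility argument modulo $\beta+1$. Fix distinct classes $C_i,C_j$ and $u\in C_i$, $w\in C_j$. If $u\sim w$, then $u^\star\sim w$ and $w^\star\sim u$ follow from $N[u]=N[u^\star]$ (and symmetrically), so $\{u^\star,w^\star\}\subseteq N(u)\cap N(w)$; the remaining $a-2$ common neighbors lie in classes $C_l$ with $l\notin\{i,j\}$ and, by Lemma~14, must appear as full classes, giving $(\beta+1)\mid(a-2)$. If $u\not\sim w$, the same twin argument forces $u^\star\notin N(w)$ and $w^\star\notin N(u)$, so $N(u)\cap N(w)$ is a union of full classes $C_l$ with $l\notin\{i,j\}$, giving $(\beta+1)\mid a$. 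Since $\Gamma$ is connected with at least two classes, the adjacent case actually occurs; if the non-adjacent case also occurred we would have $(\beta+1)\mid 2$, contradicting $\beta>1$. Hence every pair of distinct classes is completely joined.

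Finally, collapsing each twin pair to a single vertex yields a complete multipartite graph $\Delta$ with $s$ parts of common size $(\beta+1)/2$, namely the images of the $C_i$; by construction $\Gamma$ is the 2-clique extension of $\Delta$. A short count using $k=1+(s-1)(\beta+1)$ and $n=s(\beta+1)$ gives $n-k+1=\beta+1$, so the parts have size $(n-k+1)/2$, as required.
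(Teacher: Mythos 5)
Your proof is correct and follows essentially the same route as the paper's: partition $V(\Gamma)$ into the classes $B[v]$ via Lemma~13, observe each class induces a perfect matching of twin pairs, use Lemma~14 to see that common neighbourhoods decompose into full classes, and run the divisibility argument modulo $\beta+1$ (comparing $a$ with $a-2$) to force all distinct classes to be completely joined. The paper phrases this through an explicit quotient graph $\Gamma/\rho$, but the substance is identical.
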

\begin{proof}
Let each vertex in $\Gamma$ be of type $(C)$.
Consider the relation $\rho$ on the set of all vertices of $\Gamma$
setting $v\ \rho \ u$ if and only if $u \in B[v]$ in $\Gamma$.

By Lemma~13, the relation $\rho$ is an
equivalence relation.

Let us consider a quotient graph  $\Gamma / \rho$ setting $B[v]$ adjacent to  $B[u]$ if and only if $v$ adjacent to $u$.
 By Lemma~14, the map $v\rightarrow B[v]$ from $\Gamma$ on $\Gamma / \rho$ preserves the adjacency of vertices.

The quotient graph $\Gamma_{\rho} = \Gamma / \rho$ has the vertex set $\{B[v] | v \in \Gamma\}$. Thus,
$\Gamma_{\rho}$ has exactly
$\displaystyle\frac{n}{\beta + 1}$ vertices. Moreover, in view of Lemma~14, the degree of each vertex is equal to
$\displaystyle\frac{k - 1}{\beta + 1}$.

Let $v_{\rho}=B[v]$ and $u_{\rho}=B[u]$ be distinct vertices of $\Gamma_{\rho}$.
According to the equivalence relation $\rho$, there are only two possibilities for the number of vertices in  $N(v_{\rho})\cap N(u_{\rho})$ in $\Gamma_{\rho}$. We use $\{v^{\star}, u^{\star}\}$ in the same sense as well as in the proof of Lemma~18.

$(1)$ Let a vertex $u$ do not belong to $N[v]\cup B[v]$.
In this case, $N(v)\cap N(u)$ contains $B[w]$ for any
$w\in (N(v)\cap N(u))\setminus \{v^{\star}, u^{\star}\}$. But $N(v)\cap N(u)$ does not contain $v^{\star}$ and $u^{\star}$.
Hence, the vertices $v_{\rho}$ and $u_{\rho}$ have exactly $\displaystyle\frac{a}{\beta + 1}$ common adjacent vertices in $\Gamma_{\rho}$.

$(2)$ Let a vertex $u$ belong to $N[v]\setminus B[v]$.
In this case, $N(v)\cap N(u)$ contains both $v^{\star}$ and $u^{\star}$, and $B[w]$ for any
$w\in (N(v)\cap N(u))\setminus \{v^{\star},u^{\star}\}$.
Hence, the vertices $v_{\rho}$ and $u_{\rho}$ have exactly $\displaystyle\frac{a - 2}{\beta + 1}$ common adjacent vertices in $\Gamma_{\rho}$.

Since $\beta > 1$, these two cases can not be implemented together.

If case $(1)$ occurs for the vertices of $\Gamma$, then it follows that $N[v] \setminus B[v] = \emptyset$.
However,
$N[v] \setminus B[v] = N(v) \setminus \{v^\star\}$, and so $k = 1$, a contradiction,
because $\Gamma$ is a strictly Deza graph and has diameter 2.

Suppose that case  $(2)$ occurs  for the vertices of $\Gamma$.
Then the quotient graph $\Gamma_{\rho}$ is a
complete graph on $\displaystyle\frac{n}{\beta + 1}$ vertices
and $B[v]$ is the union of $\frac{(\beta+1)}{2}={\displaystyle\frac{n - k+1}{2}}$ disjoint edges.

Hence that graph $\Gamma$ is a extension of the complete graph on $\displaystyle\frac{n}{\beta + 1}$ vertices with $t$ disjoint copies of $K_2$, there $t = \displaystyle\frac{(\beta + 1)}{2}$. Thus, $\Gamma$ is isomorphic to the $2$-clique extension of a complete multipartite graph with parts of size
$t= \displaystyle\frac{(\beta + 1)}{2} = \displaystyle\frac{(n - k+1)}{2}$.
\end{proof}

We proved that if a strictly Deza graph $\Gamma$ satisfies the condition of Theorem 1, then $\Gamma$ cannot contain only vertices of type $(B)$ by Lemma~17. There are no vertices of type  $(A1)$  in $\Gamma$ by Lemma~18 and there are no vertices of type  $(A2)$  in $\Gamma$ by Lemma~19.

Hence, all the vertices in $\Gamma$ are of type $(C)$. By Lemma~\ref{last},  $\Gamma$ satisfies the conclusion of Theorem~1.

Theorem~1 is proved.

\bigskip

\end{document}